\renewcommand{\phi}{\varphi}
\newcommand{\findim}{\mathrm{findim}\,}
\newcommand{\Findim}{\mathrm{Findim}\,}
\newcommand{\dell}{\mathrm{dell}\,}
\newcommand{\phidim}{\varphi\dim}
\newcommand{\psidim}{\psi\dim}
\renewcommand{\dell}{\mathrm{dell}\,}
\newcommand{\pd}{\mathrm{pd}\,}
\newcommand{\gldim}{\mathrm{gldim}\,}
\newcommand{\op}{\mathrm{op}}
\newcommand{\N}{\mathbb{N}}
\renewcommand{\k}{\mathbb{K}}
\renewcommand{\mod}{\mathrm{mod}\,}
\newcommand{\Mod}{\mathrm{Mod}\,}
\newcommand{\dirsum}{\xhookrightarrow{\!\oplus}}
\newcommand{\into}{\xhookrightarrow{}}
\newcommand{\agemo}{\raisebox{\depth}{\scalebox{1}[-1]{\(\Omega\)}}}
\newcommand{\Tr}{\mathrm{Tr}\,}
\newcommand{\depth}{\mathrm{depth}\,}
\newcommand{\kdell}{k\text{-}\mathrm{dell}\,}
\newcommand{\ddell}{\mathrm{ddell}\,}
\newcommand{\subddell}{\mathrm{sub}\text{-}\mathrm{ddell}\,}
\theoremstyle{plain}
\newtheorem{theorem}{Theorem}[section]
\newtheorem{definition}[theorem]{Definition}
\newtheorem{lemma}[theorem]{Lemma}
\newtheorem{proposition}[theorem]{Proposition}
\newtheorem{corollary}[theorem]{Corollary}
\newtheorem{remark}[theorem]{Remark}
\newtheorem{example}[theorem]{Example}
\newtheorem{question}[theorem]{Question}
\title{Homological Invariants of left and right serial path algebras}
\author{Ruoyu Guo}
\thanks{Department of Mathematics, Brandeis University, Waltham, MA, USA. \texttt{rguo@brandeis.edu}}
\begin{document}

\maketitle

\begin{abstract}
We investigate the relationship between the delooping level ($\dell\!$) and the finitistic dimension of left and right serial path algebras. These 2-syzygy finite algebras have finite delooping level, and it can be calculated with an easy and finite algorithm. When the algebra is right serial, its right finitistic dimension is equal to its left delooping level. When the algebra is left serial, the above equality only holds under certain conditions. We provide examples to demonstrate this and include discussions on the sub-derived ($\subddell\!$) and derived delooping level ($\ddell\!)$. Both $\subddell\!$ and $\ddell\!$ are improvements of the delooping level. We motivate their definitions and showcase how they can behave better than the delooping level in certain situations throughout the paper.
\end{abstract}

\section{Introduction and Definitions}

We present some new results on the finitistic dimension conjecture over finite dimensional algebras. This important homological conjecture in representation theory is the sufficient condition for numerous other conjectures including the Nakayama conjecture, Gorenstein symmetry conjecture, and Anslander-Reiten conjecture, to name a few. The representational approaches covered in this paper rely on studying the properties of homological invariants and creating new ones that are upper or lower bounds of the finitistic dimension. They are the delooping level \cite{gelinas2022}, sub-derived delooping level, and the derived delooping level \cite{guo2025derived}. We study their behavior in monomial algebras and specialize to left and right serial path algebras. As subclasses of monomial algebras, left and right serial path algebras enjoy a lot of good properties such as 2-syzygy finiteness and having a tractable syzygy structure. We can also calculate their little and big finitistic dimensions with the margin of error at most one \cite{huisgen1991predicting}. From the class of monomial algebras we find one of the first examples where the big the little finitistic dimensions differ \cite{huisgen1992homological}. Various bounds of the finitistic dimension of monomial algebras are studied using different methods \cite{green1985projective, GKK1991, igusa1990syzygy, shi2003finitistic}. Despite the large amount of work on monomial algebras, their finitistic dimensions still need to be calculated on a case-by-case basis. Our main result shows we can calculate the right finitistic dimension through the left delooping level if the algebra is right serial or left serial with an additional condition. This adds to the list of algebras whose big finitistic dimension is described by the delooping level of the opposite algebra. The main results are a series of equalities when $\Lambda$ is a right serial algebra or left serial under an additional condition
\[
\findim\Lambda = \Findim\Lambda = \dell\Lambda^{\op} = \ddell\Lambda^{\op} < \infty,
\]
where all the homological dimensions use \textbf{right} modules by default. When the equality does not hold in the left serial case, we provide a representation-finite algebra in Example \ref{ex:right serial} such that $\findim\Lambda = \Findim\Lambda = \ddell\Lambda^{\op} < \dell\Lambda$. We also recover known results on Nakayama algebras and monomial algebras of acyclic quivers using our proposed method along the way. While $\Findim\Lambda = \ddell\Lambda^{\op}$ holds in several examples in which $\Findim\Lambda<\dell\Lambda^{\op}$ such as in \cite[Example 3.8]{guo2025derived} and Example \ref{ex:right serial}, $\Findim\Lambda=\ddell\Lambda^{\op}$ may not be true in general considering the example \cite[Example 4.22]{barrios2024delooping}. There is still much to study to what extent the derived delooping level can describe the big finitistic dimension.

For the rest of the paper, let $\Lambda$ be a finite dimensional algebra over an algebraically closed field $\k$. Let $\mod\Lambda$ and $\Mod\Lambda$ be the category of finitely generated \textbf{right} $\Lambda$-modules and the category of all \textbf{right} $\Lambda$-modules. The (right) little and big finitistic dimension conjectures say respectively that for a finite dimensional algebra $\Lambda$,
\[
\findim\Lambda = \sup \{\pd M\mid M\in\mod\Lambda, \pd M<\infty\} < \infty,
\]
\[
\Findim\Lambda = \sup \{\pd M\mid M\in\Mod\Lambda, \pd M<\infty\} < \infty,
\]
where $\pd M$ is the projective dimension of $M$.

A quiver $Q=(Q_0, Q_1, s, t)$ is a directed graph with four pieces of information, where $Q_0$ is the vertex set, $Q_1$ is the arrow set, and $s,t:Q_1\to Q_0$ are the starting and terminal vertices of an arrow in $Q_1$. We frequently consider paths in the quiver, so for convenience, we extend the domain of $s$ and $t$ to include all paths in $Q$ in the natural way. For each quiver $Q$, we can associate with it a path algebra whose $\k$-basis is the set of all paths in $Q$ and multiplication is path concatenation. Introductions to quiver representations can be found in \cite{assem2006elements, schiffler2014quiver}. For a path algebra $\Lambda=\k Q/I$, we denote by $P_v$ and $S_v$ the indecomposable projective and simple modules whose top is supported on the vertex $v$, respectively.

Path algebras of quivers provide a wealth of examples for studying this conjecture, and they are very general in the finite dimensional algebra case in the following sense. Every finite dimensional algebra over a field $\k$ is Morita equivalent to a basic finite dimensional algebra, which is then isomorphic to some quiver path algebra $\k Q/I$ subject to relations $I$ when $\k$ is algebraically closed. Since the finitistic dimension is invariant under Morita equivalence and field extensions \cite{jensen1982homological}, it suffices to assume $\k$ is algebraically closed and study the finitistic dimensions of quiver path algebras.

The organization of the paper is as follows. In Section \ref{sec:variants of the delooping level}, we recall two invariants related to the delooping level called the sub-derived delooping level and derived delooping level, focusing on the motivation of their definition. The derived delooping level is especially better in terms of its properties and as an upper bound. In Section \ref{sec:Monomial algebras}, we introduce the technique that we use for proving the main theorems and recover some known results along the way. Section \ref{sec:left and right serial path algebras} contains the main result on left and right serial algebras and an illuminating representation-finite algebra example.

\textbf{Acknowledgments.} The author is grateful for his advisor Kiyoshi Igusa for helpful discussions on the paper.

\section{Variants of the Delooping Level}
\label{sec:variants of the delooping level}

We first recall the definition of the delooping level $\dell\Lambda$ for an algebra $\Lambda$. For two modules $M$ and $N$, let $M \dirsum N$ mean $M$ is a \textbf{direct summand} of $N$. Since we do not need to consider projective summands when calculating the projective dimension, modules considered hereafter have their projective summands omitted unless stated otherwise.
\begin{definition}[\cite{gelinas2022}]
Let $\agemo=\Tr\Omega\Tr$ be the left adjoint of the syzygy functor $\Omega$ in $\underline{\mod\!}\,\Lambda$. Define
\[
\dell M = \inf \{n\in\N\mid \Omega^n M \text{ is a direct summand of } \Omega^{n+1} N \text{ for some module } N\},
\]
and we can show that
\begin{equation}
\label{eq:def of dell}
\dell M = \inf \{n\in\N\mid \Omega^n M \dirsum \Omega^{n+1}\agemo^{n+1}\Omega^n M\}.
\end{equation}
Define the \textbf{delooping level} of an algebra $\Lambda$ as
\[
\dell\Lambda = \sup\{\dell S\mid \text{$S$ is a simple $\Lambda$-module}\}.
\]

If $M$ is a $k$-syzygy for some $k$, then we say $M$ is \textbf{$k$-deloopable}. If $M=\Omega^i N_i$ for every $i\in\N$, then we say $M$ is \textbf{infinitely deloopable}.
\end{definition}
It is proved in \cite{gelinas2022} that $\Findim\Lambda\leq\dell\Lambda^{\op}$. The delooping level does not need to be finite a priori, but it is often small and in fact equal to the finitistic dimension in many cases. The cases where we know $\Findim\Lambda=\dell\Lambda^{\op}$ include
\begin{itemize}
\item algebras with finite global dimension
\item Gorenstein algebras \cite{gelinas2022}
\item Nakayama algebras \cite{ringel2021finitistic, sen2021delooping}
\item radical square zero algebras \cite{gelinas2021finitistic}
\item truncated path algebras \cite{barrios2024delooping}
\end{itemize}
and we add to this list right serial path algebras and a subclass of left serial path algebras in Section \ref{sec:left and right serial path algebras}. However, the equality is not true for monomial algebras in general. If $\Lambda$ is monomial, the difference $\dell\Lambda^{\op}-\Findim\Lambda$ is finite but can be arbitrarily large.

Despite many promising results about the delooping level, there is an example in \cite{kershaw2023} where $\dell\Lambda=\infty$ for a finite dimensional algebra $\Lambda$, where $\Lambda$ is a quiver path algebra with only two vertices. Moreover, that same example shows that the set of finitely generated modules with finite delooping level is not closed under extension, submodules, or quotients. This means the delooping level does not necessarily behave well under exact sequences, so the delooping level of an arbitrary module has to be calculated or estimated independently using the definition \eqref{eq:def of dell} without notable properties. However, if we know the delooping level of a module is finite, \eqref{eq:def of dell} provides a finite algorithm for finding what it is.

The $\phi$-dimension and $\psi$-dimension \cite{igusa2005} are two other related invariants that have been widely-used for over two decades. They satisfy $\findim\Lambda\leq\phidim\Lambda\leq\psidim\Lambda$.  In contrast to the delooping level, the $\psi$-dimension and projective dimension interact very well under short exact sequences in the sense that if $0\to M_1 \to M_2 \to M_3\to 0$ is a short exact sequence and $\pd M_3<\infty$, then $\pd M_3\leq \psi(M_1\oplus M_2) + 1$. This property has allowed many applications for the $\psi$-dimension, one of the strongest of which shows $\findim\Lambda<\infty$ for $\Lambda$ with representation dimension no more than 3. Other applications include the Igusa-Todorov algebras \cite{wei2009finitistic} and the more general LIT algebras \cite{bravo2021generalised}. Due to the difference in delooping level's properties, these results cannot yet be replicated for the delooping level. For example, it is still an open question if $\Findim\Lambda<\infty$ for representation dimension 3 algebras.

The invariants in the rest of the section were introduced in \cite{guo2025derived}, but we hope to present their motivations more cohesively and pose future questions regarding their applications. In order to strengthen the properties of the delooping level, we first consider if there are any implications for submodules of a module with finite delooping level. This question leads to the definition of the sub-derived delooping level $\subddell\Lambda$ and a better upper bound for $\Findim\Lambda^{\op}$.

\begin{definition}[\cite{guo2025derived}]
\label{def:subddell}
The \textbf{sub-derived delooping levels} of a module $M$ and of an algebra $\Lambda$ are
\begin{itemize}
\item $\subddell M = \inf\{\dell N \mid M\into N\}$
\item $\subddell \Lambda = \sup\{\subddell S \mid S \text{ is a simple $\Lambda$-module} \}$
\end{itemize}
\end{definition}

\begin{theorem}
For any algebra $\Lambda$,
\label{thm:subddell}
\[
\Findim\Lambda^{\op} \leq \subddell\Lambda \leq \dell\Lambda.
\]
\end{theorem}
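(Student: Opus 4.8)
The plan is to establish the two inequalities separately; the right-hand one is formal and the left-hand one carries the content. For $\subddell\Lambda\le\dell\Lambda$, recall from Definition~\ref{def:subddell} that $\subddell S=\inf\{\dell N\mid S\into N\}$, and the identity map realizes $S$ as a submodule of itself, so $\subddell S\le\dell S$ for every simple module $S$; taking the supremum over the finitely many simples gives $\subddell\Lambda\le\dell\Lambda$.

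For $\Findim\Lambda^{\op}\le\subddell\Lambda$ I would first reduce to a statement about a single module. If $\subddell\Lambda=\infty$ there is nothing to prove, so assume it is finite; let $S_1,\dots,S_m$ be the simple right $\Lambda$-modules. Each $\subddell S_i$ is then a finite infimum of non-negative integers, hence attained, so we may pick $N_i$ with $S_i\into N_i$ and $\dell N_i=\subddell S_i$. Put $N=\bigoplus_i N_i$. Since $\Omega$ and $\agemo$ are additive, one checks directly from \eqref{eq:def of dell} that $\dell\bigl(\bigoplus_i N_i\bigr)=\max_i\dell N_i$, so $\dell N=\max_i\subddell S_i=\subddell\Lambda$; moreover $\Lambda/\rad\Lambda\cong\bigoplus_i S_i\into\bigoplus_i N_i=N$. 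It therefore suffices to prove: \emph{if $\Lambda/\rad\Lambda$ embeds as a submodule of a module $N$, then $\Findim\Lambda^{\op}\le\dell N$.} Taking $N=\Lambda/\rad\Lambda$ (so $\dell N=\dell\Lambda$) recovers the inequality $\Findim\Lambda^{\op}\le\dell\Lambda$ of \cite{gelinas2022}, so the task is to rerun that argument with $\Lambda/\rad\Lambda$ replaced by an arbitrary ambient module $N$.

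To do this I would revisit the proof of $\Findim\Lambda\le\dell\Lambda^{\op}$ in \cite{gelinas2022}, isolate exactly how it uses the simple modules — it takes a module of finite projective dimension over $\Lambda^{\op}$ and derives a contradiction from the fact that $d:=\dell\Lambda$ forces a suitable $d$-th syzygy built from the simples to be infinitely deloopable — and then feed in the ambient $N$ instead. The bridge between $N$ and the $S_i$ is the iterated horseshoe lemma: the embedding $S_i\into N_i$ yields, for each $k$, a short exact sequence $0\to\Omega^k S_i\to\Omega^k N_i\oplus Q_k\to\Omega^k(N_i/S_i)\to 0$ with $Q_k$ projective, through which the good behaviour of $\Omega^{\dell N}N_i$ is transported to the cohomological obstruction attached to $\Omega^{\dell N}S_i$. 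Concretely I would recast the part of Gélinas's argument that one actually needs as the vanishing of an $\Ext_{\Lambda^{\op}}$-group (equivalently a stable/Tate-cohomology group) and verify that this vanishing propagates along these short exact sequences, which is the point at which an exact-sequence-friendly reformulation, rather than the bare delooping level, is required.

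The main obstacle is precisely this transport step. As noted after \eqref{eq:def of dell}, the delooping level is \emph{not} inherited by submodules, so one cannot simply say that $\Omega^{\dell N}N$ is deloopable and hence so is its submodule $\Omega^{\dell N}S_i$; the crux is to extract from Gélinas's proof only the invariant that genuinely matters and to phrase it in a form stable under the horseshoe short exact sequences above. A secondary point is that the target is the big finitistic dimension $\Findim$, not merely $\findim$; the passage to arbitrary, not necessarily finitely generated, modules should be handled exactly as in \cite{gelinas2022}, since both the reduction above and the horseshoe lemma are insensitive to finite generation.
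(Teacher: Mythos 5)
Your first inequality and the reduction are fine: $S\into S$ gives $\subddell S\le\dell S$, and the passage to a single ambient module $N\supseteq\Lambda/\rad\Lambda$ with $\dell N=\subddell\Lambda$ is correct (the infimum of a nonempty set of natural numbers is attained, and $\dell$ of a finite direct sum is the maximum of the $\dell$'s). But the heart of the theorem is $\Findim\Lambda^{\op}\le\subddell\Lambda$, and there your text is a plan rather than a proof: you state that one must ``isolate the invariant that genuinely matters'' in G\'elinas's argument and ``verify that this vanishing propagates'' along horseshoe sequences, and you then name exactly this transport step as the unresolved obstacle. Since the delooping level itself does not pass to submodules, the theorem stands or falls on this step, so as written the proposal has a genuine gap precisely where the content lies.

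The missing mechanism is more direct than the iterated horseshoe comparison you set up, and it uses the original short exact sequence $0\to S\to N_S\to N_S/S\to 0$ rather than its syzygy shifts. The obstruction G\'elinas's proof actually extracts from a left module $X$ with $p=\pd X<\infty$, $p\ge 1$, is a nonvanishing homology group: minimality of the projective resolution of $X$ gives $\mathrm{Tor}^{\Lambda}_p(\Lambda/\rad\Lambda,X)\neq 0$, hence $\mathrm{Tor}^{\Lambda}_p(S,X)\neq 0$ for some simple right module $S$. Two observations then finish the argument. First, if $\dell N\le p-1$, say $\Omega^{p-1}N\dirsum\Omega^{p}N'$, then $\mathrm{Tor}_p(N,X)\cong\mathrm{Tor}_1(\Omega^{p-1}N,X)$ is a direct summand of $\mathrm{Tor}_1(\Omega^{p}N',X)\cong\mathrm{Tor}_{p+1}(N',X)=0$; this is the only place deloopability is used, and it applies to any module $N$, not just to simples. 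Second --- and this is the step your horseshoe strategy is groping for --- the inclusion $S\into N_S$ induces an \emph{injection} $\mathrm{Tor}_p(S,X)\into\mathrm{Tor}_p(N_S,X)$, because the preceding term $\mathrm{Tor}_{p+1}(N_S/S,X)$ in the long exact sequence vanishes as $\pd X=p$. Combining: $\mathrm{Tor}_p(S,X)\neq 0$ forces $\mathrm{Tor}_p(N_S,X)\neq 0$, which forces $\dell N_S\ge p$, i.e. $\subddell S\ge p$; taking suprema gives $\Findim\Lambda^{\op}\le\subddell\Lambda$. (For $\Findim$ rather than $\findim$ one uses that minimal projective resolutions exist for arbitrary modules over a finite dimensional, hence perfect, algebra.) Until your proposal identifies this $\mathrm{Tor}$-vanishing formulation and the injectivity of $\mathrm{Tor}_p(S,X)\to\mathrm{Tor}_p(N_S,X)$ in top degree, the middle inequality remains unproved.
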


The sub-derived delooping level can be strictly better than the delooping level as an upper bound in Example \ref{ex:subddell} below and can also give no improvement in \cite[Example 3.8]{guo2025derived}, where $\infty=\subddell\Lambda=\dell\Lambda > \Findim\Lambda^{\op} = 1$. The next example is one of the smallest possible with $\dell\Lambda>\Findim\Lambda^{\op}=\subddell\Lambda$.

\begin{example}
\label{ex:subddell}
Let $\Lambda=\k Q/I$ be the path algebra of the following quiver $Q$

\begin{center}
\begin{tikzcd}[ampersand replacement=\&]
1 \arrow[r,shift left] \arrow[r,leftarrow,shift right] \arrow[loop left] \& 2 \arrow[loop right]
\end{tikzcd} 
\end{center}
with the following indecomposable projectives and injectives
\begingroup
\setlength\arraycolsep{1pt}
\begin{center}
Projectives: $\begin{matrix}
{} & 1 & {} \\
1 & {} & 2 \\
2 & {} & 1 \\
1 & {} & {}
\end{matrix}$, \quad
$\begin{matrix}
{} & 2 & {} \\
2 & {} & 1 \\
1 & {} & 1
\end{matrix}$;
\qquad
Injectives: $\begin{matrix}
{} & {} & {} & {} & 1 \\
2 & {} & 2 & {} & 1 \\
1 & {} & {} & 2 & {} \\
{} & 1 & {} & {} & {}
\end{matrix}$, \quad
$\begin{matrix}
{} & {} & 1 \\
2 & {} & 1 \\
{} & 2 & {}
\end{matrix}$.
\end{center}

It is clear that the module $\begin{matrix} 2 \\ 1 \end{matrix}$ is infinitely deloopable. We know $\dell S_2 > 1$ since
\[
\Omega S_2 = \begin{matrix} 1 \\ 1 \end{matrix}, \text{ which is not a direct summand of } \Omega^2\agemo^2\Omega S_2 = S_1 \oplus \begin{matrix} 2 \\ 1 \end{matrix}.
\]

Since $\Omega^2 S_2 = \left(\begin{matrix} 2 \\ 1 \end{matrix} \right)^2$, $\dell S_2 = \dell\Lambda = 2$. On the other hand, $S_2$ is a submodule of $\begin{matrix} {} & 2 & {} \\ 2 & {} & 1 \end{matrix}$ whose syzygy is $(S_1)^2$ and infinitely deloopable, so
\[
\subddell S_2=\subddell\Lambda=1=\Findim\Lambda^{\op}<\dell\Lambda=2.
\]
\endgroup
\end{example}

In a similar vein, we ask if there are any interesting properties for quotient modules of some module $M$ of finite delooping level. It turns out that it is not sufficient to only consider $M$ and the quotient. We need to consider the kernel of the projection from $M$ to the quotient and moreover all exact sequences that end in the quotient module. This motivates the definition of the derived delooping level.

\begin{definition}[\cite{guo2025derived}]
\label{def:ddell}
Let $M$ be a $\Lambda$-module.
\begin{itemize}
\item The \textbf{$k$-delooping level} of $M$ is
\begin{align*}
\kdell M = \inf\{n\in\N\mid \Omega^n M \dirsum \Omega^{n+k} N \text{ for some $N$} \}
\end{align*}
\item The \textbf{derived delooping level} of $M$ is
\begin{align*}
\ddell M = \inf \{m\in\N \mid & \,\exists n\leq m \text{ and an exact sequence in $\mod\Lambda$ of the form} \\
& \,\, 0 \to C_n \to C_{n-1} \to \cdots \to C_1 \to C_0 \to M \to 0, \\
& \text{ where $(i+1)$-$\dell C_i\leq m-i$, } i=0,1,\dots,n  \},
\end{align*}
\item The \textbf{derived delooping level} of $\Lambda$ is 
\[
\ddell \Lambda = \sup \{\ddell S \mid S \text{ is a simple $\Lambda$-module} \}.
\]
\end{itemize}
\end{definition}

\begin{theorem}
For any algebra $\Lambda$,
\[
\Findim\Lambda^{\op} \leq \ddell\Lambda \leq \dell\Lambda
\]
\end{theorem}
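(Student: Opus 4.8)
The plan is to handle the two inequalities separately, since the right-hand one is essentially formal and the left-hand one carries all the content. For $\ddell\Lambda\le\dell\Lambda$ it suffices to show $\ddell S\le\dell S$ for each simple $S$ and then take suprema. If $\dell S=m<\infty$, feed the trivial exact sequence $0\to S\to S\to 0$ (so $n=0$ and $C_0=S$) into Definition~\ref{def:ddell}: the only requirement is $1\text{-}\dell C_0\le m$, and $1\text{-}\dell S=\dell S=m$ holds verbatim, because the $k=1$ instance of $\kdell$ is literally $\dell$. Hence $\ddell S\le\dell S$, and passing to suprema gives $\ddell\Lambda\le\dell\Lambda$.

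For $\Findim\Lambda^{\op}\le\ddell\Lambda$, set $m=\ddell\Lambda$, which we may assume finite. Let $N$ be a $\Lambda^{\op}$-module with $d:=\pd N<\infty$; we want $d\le m$, so suppose toward a contradiction that $d>m$. Since $\Lambda$ is finite dimensional, $N$ admits a minimal projective resolution $Q_\bullet$, and $\pd N=d$ forces $Q_d\neq 0$ with $Q_j=0$ for $j>d$. As every differential of a minimal resolution factors through a radical, applying $\Lambda/\rad\Lambda\otimes_\Lambda-$ annihilates all differentials, so $\mathrm{Tor}^{\Lambda}_{d}(\Lambda/\rad\Lambda,N)\cong\top Q_d\neq 0$, and therefore $\mathrm{Tor}^{\Lambda}_{d}(S,N)\neq 0$ for some simple right $\Lambda$-module $S$. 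Now invoke $\ddell S\le m$ to obtain an exact sequence $0\to C_n\to\cdots\to C_0\to S\to 0$ in $\mod\Lambda$ with $n\le m$ and $(i+1)\text{-}\dell C_i\le m-i$ for $i=0,\dots,n$; the goal is to push the non-vanishing of $\mathrm{Tor}^{\Lambda}_{d}(S,N)$ along this sequence to a contradiction.

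The argument then has two computational steps. First, I would show $\mathrm{Tor}^{\Lambda}_{l}(C_i,N)=0$ for all $l\ge d-i$: the inequality $(i+1)\text{-}\dell C_i\le m-i$ says that $\Omega^{m-i}C_i$ is a direct summand, up to projectives, of $\Omega^{m+1}N'_i$ for some $N'_i$, so combining additivity of $\mathrm{Tor}^{\Lambda}_{\ge 1}$, the syzygy-shift isomorphism $\mathrm{Tor}^{\Lambda}_{s}(\Omega^{k}X,N)\cong\mathrm{Tor}^{\Lambda}_{s+k}(X,N)$ for $s\ge 1$, the insensitivity of positive $\mathrm{Tor}$ to projective summands, and the vanishing of $\mathrm{Tor}^{\Lambda}_{j}(-,N)$ for $j>d$, one obtains the claim; here the hypothesis $d>m$ is precisely what absorbs the losses $m-i$, and $d-i\ge 1$ because $i\le n\le m<d$. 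Second, I would break the exact sequence into short exact sequences $0\to K_i\to C_i\to K_{i-1}\to 0$ for $0\le i\le n-1$, with $K_{-1}=S$ and $K_{n-1}=C_n$, and run a descending induction on $j$ (from $j=n-1$) using the long exact $\mathrm{Tor}^{\Lambda}(-,N)$-sequences to deduce $\mathrm{Tor}^{\Lambda}_{l}(K_j,N)=0$ for all $l\ge d-j-1$. At $j=-1$ this reads $\mathrm{Tor}^{\Lambda}_{d}(S,N)=0$, contradicting the choice of $S$; hence $d\le m$, and $\Findim\Lambda^{\op}\le\ddell\Lambda$.

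I expect the first computational step to be the only real obstacle. One has to keep every $\mathrm{Tor}$ computation in strictly positive homological degree so that the syzygy-shift isomorphism is valid, and one must verify that the degree bookkeeping lands the vanishing of $\mathrm{Tor}^{\Lambda}_{\bullet}(C_i,N)$ exactly in the range $l\ge d-i$: it is the strict inequality $d>m$ together with $n\le m$ that makes this range dovetail with the inductive conclusion $l\ge d-j-1$ across each short exact sequence, and any weaker bound would break the descending induction in the second step.
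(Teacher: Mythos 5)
The paper states this theorem without proof, recalling it from the cited reference, so there is no in-text argument to measure your proposal against; judged on its own, your proof is correct and complete. The right-hand inequality via the length-zero exact sequence $0\to C_0\to S\to 0$ with $C_0=S$ and the identity $1\text{-}\dell S=\dell S$ is exactly right. For the left-hand inequality, the two places where the argument could break both check out: in your first step, $(i+1)\text{-}\dell C_i\le m-i$ gives $\Omega^{m-i}C_i\dirsum\Omega^{m+1}N_i'$ up to projectives, and for $l\ge d-i$ one has $l\ge (m+1)-i=(m-i)+1\ge 1$ (using $d>m$ and $i\le n\le m$), so the shift $\mathrm{Tor}^\Lambda_l(C_i,N)\cong\mathrm{Tor}^\Lambda_{l-m+i}(\Omega^{m-i}C_i,N)$ stays in positive degrees and lands inside $\mathrm{Tor}^\Lambda_{l+i+1}(N_i',N)$ with $l+i+1\ge d+1>\pd N$, forcing vanishing; in your second step, the descending induction on the kernels $K_j$ dovetails correctly, since in the long exact sequence for $0\to K_j\to C_j\to K_{j-1}\to 0$ the term $\mathrm{Tor}^\Lambda_l(K_{j-1},N)$ for $l\ge d-j$ is flanked by $\mathrm{Tor}^\Lambda_l(C_j,N)$ and $\mathrm{Tor}^\Lambda_{l-1}(K_j,N)$, both zero, and at $j=-1$ this contradicts $\mathrm{Tor}^\Lambda_d(S,N)\ne 0$. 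The only hypotheses you rely on beyond the definitions are that finite dimensional algebras are perfect (so arbitrary modules in $\Mod\Lambda^{\op}$ admit minimal projective resolutions and $\top Q_d\ne 0$) and that positive-degree Tor ignores projective summands; both are standard. This is the natural generalization of the Tor-shifting proof of $\Findim\Lambda^{\op}\le\dell\Lambda$, with the derived structure handled by splicing the exact sequence, and I see no gap.
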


The derived delooping level is a better upper bound compared to the delooping level in the same Example \ref{ex:subddell}.

\begin{example}[Example \ref{ex:subddell} Revisited]
\label{ex:ddell}
Let $\Lambda$ be the same as in Example \ref{ex:subddell}. The short exact sequence
\[
0 \to S_1 \to \begin{matrix} 2 \\ 1 \end{matrix} \to S_2 \to 0
\]
shows $\ddell S_2 = 1$. Therefore in this case,
\[
1 = \Findim\Lambda^{\op} = \ddell\Lambda = \subddell\Lambda < \dell\Lambda = 2.
\]
\end{example}

Note that if there exists a counterexample $\Lambda$ of the finitistic dimension conjecture, the derived delooping level of some simple $\Lambda^{\op}$-module $S$ must be infinite, so by Definition \ref{def:ddell}, there must be a large number of right $\Lambda^{\op}$-modules with infinite $k$-delooping level for some $k$ so that $(i+1)$-$\dell C_i=\infty$ for some $i$ in \textbf{every} exact sequence of the form $0\to C_n\to C_{n-1}\to\cdots\to C_1\to C_0\to S\to 0$. Currently, the only example with $\dell\Lambda=\infty$ in \cite{kershaw2023} features a local algebra, and it has one module with infinite delooping level. Designing algebras which have more modules of infinite delooping level or $k$-delooping levels is crucial to advancing the progress or finding counterexamples of the finitistic dimension conjecture.

\begin{question}
Can we design a finite dimensional algebra that has enough modules with infinite delooping level to make $\ddell S=\infty$ for some simple module $S$?
\end{question}

Moreover, the property of having finite derived delooping level is preserved under extensions and submodules, so the class of modules with finite derived delooping level forms a torsion-free class in $\mod\Lambda$.

\begin{theorem}[\cite{guo2025derived}]
Let $0\to A\to B\to C\to 0$ be a short exact sequence in $\Mod\Lambda$. Then
\begin{itemize}
\item If $\ddell A<\infty$ and $\ddell C<\infty$, then $\ddell B\leq\ddell A + \ddell C + 1$.
\item If $\ddell B<\infty$, then $\ddell A\leq \ddell B+1$.
\end{itemize}
\end{theorem}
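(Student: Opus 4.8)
The plan is to reduce both statements to an elementary ``rotation'' move on short exact sequences, a recursive reformulation of $\ddell$, and the principle that one must never try to control $\dell$ of an arbitrary submodule but only ever of a kernel-or-cover placed in the right slot. Here is the bookkeeping I would set up first. For each $X$ and each $k\geq 1$, $\kdell X=\inf\{n\mid\Omega^n X\dirsum\Omega^{n+k}N\}$ is non-increasing in $k$ (because $\Omega^{n+k+1}N=\Omega^{n+k}(\Omega N)$) and satisfies $\kdell(\Omega X)\leq\kdell X$; in particular $\dell(\Omega X)=0$, since $\Omega X$ is literally a first syzygy. From Definition \ref{def:ddell} one extracts a recursive characterization: for $m\geq 0$, $\ddell M\leq m$ if and only if there is a short exact sequence $0\to K\to D\to M\to 0$ with $\dell D\leq m$ and with $K=0$ or $\ddell K\leq m-1$ --- this is nothing but cutting a $\ddell$-resolution of $M$ after its first term and, conversely, splicing one back, the shift in the conditions $(i+1)$-$\dell C_i$ being absorbed by monotonicity of $\kdell$ in $k$. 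Finally the rotation lemma: pulling back the projective cover $P_Z\onto Z$ along $Y\onto Z$ transforms $0\to X\to Y\to Z\to 0$ into $0\to\Omega Z\to X\oplus P_Z\to Y\to 0$; iterating it on $0\to A\to B\to C\to 0$ gives $0\to\Omega C\to A\oplus P_C\to B\to 0$ and then $0\to\Omega B\to\Omega C\oplus P_B\to A\oplus P_C\to 0$, while the horseshoe variant gives $0\to\Omega A\to\Omega B\oplus(\text{projective})\to\Omega C\to 0$, and chaining the latter along a $\ddell$-resolution yields $\ddell(\Omega M)\leq\ddell M$ for every $M$.

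For the second bullet, set $m=\ddell B$, $C=B/A$, and take the double rotation $0\to\Omega B\to\Omega C\oplus P_B\to A\oplus P_C\to 0$. Its middle term has delooping level $\dell(\Omega C\oplus P_B)=\dell(\Omega C)=0$, and its kernel satisfies $\ddell(\Omega B)\leq\ddell B=m$. Thus this short exact sequence exhibits $A\oplus P_C$ as a quotient of a module of delooping level $\leq m+1$ by a module of derived delooping level $\leq m=(m+1)-1$, so the recursive characterization gives $\ddell(A\oplus P_C)\leq m+1$; since projective summands are invisible to $\ddell$, this is $\ddell A\leq\ddell B+1$. No case distinction is needed: the whole point is that the ill-behaved syzygy $\Omega B$ has been confined to a kernel slot, where only $\ddell$ matters.

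For the first bullet I would induct on $\ddell A$. By the recursive characterization choose $0\to K_A\to D_A\to A\to 0$ with $\dell D_A\leq\ddell A$ and with either $K_A=0$ (forced when $\ddell A=0$) or $\ddell K_A\leq\ddell A-1$. Rotate $0\to A\to B\to C\to 0$ once to $0\to\Omega C\to A\oplus P_C\to B\to 0$ and precompose its surjection with $D_A\oplus P_C\onto A\oplus P_C$; the kernel $\widetilde K$ of the composite $D_A\oplus P_C\onto B$ then fits into $0\to K_A\to\widetilde K\to\Omega C\to 0$. When $K_A=0$ we have $\widetilde K\cong\Omega C$, and in general the inductive hypothesis applied to this sequence (legitimate since $\ddell K_A<\ddell A$) together with $\ddell(\Omega C)\leq\ddell C$ yields $\ddell\widetilde K\leq\ddell K_A+\ddell(\Omega C)+1\leq\ddell A+\ddell C$. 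Since $\dell(D_A\oplus P_C)=\dell D_A\leq\ddell A$, the recursive characterization applied to $B$ via $0\to\widetilde K\to D_A\oplus P_C\to B\to 0$ gives $\ddell B\leq\ddell A+\ddell C+1$.

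I expect the main obstacle to be conceptual rather than computational: because $\dell$ is not monotone under submodules and not well-behaved under extensions (the Kershaw example cited in the paper), the tempting strategy of resolving $A$ or $B$ while tracking the delooping levels of the intervening modules fails outright. Everything hinges on arranging that each module landing in a ``$D$-slot'' is either a first syzygy --- hence $1$-deloopable for free --- or carries a hypothesized $\dell$-bound, while all the genuinely uncontrolled syzygies sit in kernel slots where only $\ddell$ is at stake. The one technically fiddly point is to keep the projective summands in the rotation and horseshoe sequences in exactly the right places, so that the identity $\dell(\Omega X)=0$ is applicable and the chaining behind $\ddell(\Omega M)\leq\ddell M$ genuinely composes.
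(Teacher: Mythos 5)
Your architecture---rotating short exact sequences so that uncontrolled syzygies land in kernel slots, plus a recursive reading of Definition \ref{def:ddell}---is the right one, but the load-bearing lemma, your ``recursive characterization,'' is misstated, and the error sits in exactly the direction you use. First, $\kdell X$ is non-\emph{de}creasing in $k$: your own justification ($\Omega^{n+k+1}N=\Omega^{n+k}(\Omega N)$) shows that a witness for the $(k+1)$-condition yields one for the $k$-condition, hence $k\text{-}\dell X\leq (k+1)\text{-}\dell X$; larger $k$ is the \emph{stronger} demand. Consequently the ``cutting'' half of your characterization is fine (truncating a witnessing sequence shifts each term to a lower position, so the required depth parameter drops), but the ``splicing'' half is not: if $0\to K\to D\to M\to 0$ has $\dell D\leq m$ and $\ddell K\leq m-1$ is witnessed by $0\to C'_{n'}\to\cdots\to C'_0\to K\to 0$ with $(i+1)$-$\dell C'_i\leq (m-1)-i$, then after splicing, $C'_i$ sits in position $i+1$ and must satisfy $(i+2)$-$\dell C'_i\leq m-(i+1)$---the same numerical bound but with the syzygy-depth index incremented, which does \emph{not} follow from monotonicity (a first syzygy need not be a second syzygy). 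So your characterization is only an ``only if,'' and you invoke the ``if'' at the decisive step of both bullets.

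The second bullet is recoverable from what you wrote: there the kernel slot holds $\Omega B$, and you may take its witnessing sequence to be the $\Omega$-shift of one for $B$; the sharper inequality $(k+1)$-$\dell(\Omega X)\leq k$-$\dell X$ (which is what your horseshoe chaining actually proves) then supplies exactly the missing increment at every position, giving $\ddell(A\oplus P_C)\leq \ddell B+1$. The first bullet does not survive as written: the inductive hypothesis controls only $\ddell\widetilde K$, i.e.\ the $(i+1)$-$\dell$ bounds on some witnessing sequence for $\widetilde K$, whereas the final splice into $0\to\widetilde K\to D_A\oplus P_C\to B\to 0$ requires the index-shifted bounds $(i+2)$-$\dell$, and $\widetilde K$ is a genuine extension rather than a syzygy, so the $\Omega$-shift trick is unavailable. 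The natural repair is to prove the statement for the whole two-parameter family obtained by replacing $(i+1)$-$\dell C_i\leq m-i$ with $(i+k)$-$\dell C_i\leq m-i$ in Definition \ref{def:ddell} and to carry $k$ through the induction, using $(k+1)$-$\dell(\Omega X)\leq k$-$\dell X$ at each rotation; as it stands, your induction establishes a statement too weak to close itself.
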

Another direction of future interest is to use the derived delooping level to replicate the results for representation dimension 3 algebras and the more general Igusa-Todorov and LIT algebras mentioned in the introduction. It is still unknown whether the big finitistic dimension of these algebras are finite.

\section{Monomial Algebras}
\label{sec:Monomial algebras}

We discuss some preliminary results that we will need for the rest of the paper and use them to first provide a different proof of $\dell\Lambda=\Findim\Lambda^{\op}$ for monomial algebras whose quiver is acyclic in this section. A quiver is \textbf{acyclic} if there is no oriented cycle in the quiver. We use the idea of the proof to introduce new theorems on two special classes of algebras called left and right serial path algebras in Section \ref{sec:left and right serial path algebras}.

\begin{definition}
\label{def:serial path algebra}
A module is \textbf{uniserial} if its submodules are totally ordered. Alternatively, a module is uniserial if and only if it has a unique composition series. A path algebra $\Lambda=\k Q/I$ is \textbf{left} (resp. \textbf{right}) \textbf{serial} if it is a left (resp. right) uniserial module over itself.
\end{definition}

Let $\Lambda = \k Q/I$ be a monomial algebra, where $I$ is an admissible ideal generated by monomials. There are $|Q_0|$ \textbf{trivial paths} in $Q$, one for each vertex, denoted by $e_v$ for vertex $v$. There exists a canonical set of \textbf{minimal relations} for the ideal $I$ consisting of only monomials, and we use that set of minimal relations by default and call it $B_I$. All paths in $B_I$ are called \textbf{zero paths}, and note the difference between trivial and zero paths. We call $B_Q$ the set of all nonzero paths in $Q$ including the trivial paths, so $B_Q$ is a $\k$-basis of the path algebra $\Lambda$. When we consider the opposite algebra $\Lambda^{\op}$, its corresponding set of minimal relations and $\k$-basis are $B_{\tilde{I}}$ and $B_{Q^{\op}}$, respectively.

It is easy to understand the second and higher syzygies of modules over a monomial algebra $\Lambda$ since all second syzygies are direct sums of $p\Lambda$ where $p$ is a path of length greater than or equal to 1 in $Q$ \cite{huisgen1991predicting}. Finding the syzygy of modules of the form $p\Lambda$ is also straightforward. The process amounts to finding nonzero paths to concatenate with $p$ to become a minimal relation in $I$. This has been pointed out in different languages in several papers including the original proof that $\Findim\Lambda<\infty$ for monomial algebras $\Lambda$ \cite{GKK1991} and in later works such as the proof that $\dell\Lambda=\Findim\Lambda^{\op}$ for truncated path algebras $\Lambda$ where the authors name the similar concept ``right complementary'' path \cite{barrios2024delooping} as opposed to ``right completion'' in our next Definition \ref{def:right completion}.

\begin{definition}
\label{def:right completion}
Let $Q=(Q_0, Q_1, s, t)$ be a quiver and $\Lambda=\k Q/I$ be a monomial algebra. For each nonzero path $\alpha$ in $Q$, we say $\beta$ is a \textbf{right completion} of $\alpha$ if $\alpha\beta\in I$, $\beta\notin I$, and for all factorizations $\beta=\beta_1\beta_2$ with $\beta_2$ nontrivial, $\alpha\beta_1\notin I$.
\end{definition}

In other words, $\beta$ is a right completion of $\alpha$ if there exists a right subpath $\alpha'$ of $\alpha$ such that $\alpha'\beta$ is a minimal relation. Alternatively, Definition \ref{def:right completion} says $\beta$ is a right completion of $\alpha$ if $\beta$ is a minimal nonzero path to make $\alpha\beta$ zero. Note that there may be more than one right completions for a path. We will consider sequences of right completions $\alpha_0\alpha_1\cdots\alpha_n$ where $\alpha_{i+1}$ is a right completion of $\alpha_i$ for $i=0,1,\dots,n-1$. We say a sequence $\alpha_0\alpha_1\cdots\alpha_n$ of right completions is \textbf{right maximal} if $\alpha_n$ has no right completion, and is \textbf{left maximal} if $\alpha_0$ is not a right completion of any nonzero path. If a sequence of right completions is both left and right maximal, we say it is \textbf{maximal}. The \textbf{length} of a sequence of right completions is the number of right completions in the sequence, so the sequence $\alpha_0\alpha_1\cdots\alpha_n$ has length $n$. Note that the length is not necessarily the number of minimal relations in the sequence since there may be multiple ways to factor the whole path $\alpha_0\alpha_1\cdots\alpha_n$ into right completions. By definition, maximal sequences of right completions have finite length, and for modules with infinite projective dimension, we say they are associated with sequences of right completions of infinite length. We prove two important lemmas related to right completions.

\begin{lemma}
\label{lem:syzygy of principal ideals}
Suppose $\Lambda=\k Q/I$ is a monomial algebra. Let $\mu, \nu$ be nonzero paths of length at least 1. Then the right $\Lambda$-module $\nu\Lambda$ is a direct summand of $\Omega (\mu\Lambda)$ if and only if $\nu$ is a right completion of $\mu$.
\end{lemma}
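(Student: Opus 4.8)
The plan is to compute $\Omega(\mu\Lambda)$ explicitly using the monomial structure and match it against the definition of right completion. First I would recall that $\mu\Lambda$ has $\k$-basis the set of nonzero paths of the form $\mu\gamma$ (with $\gamma$ ranging over nonzero paths such that $\mu\gamma\notin I$), and that a projective cover of $\mu\Lambda$ is $P_{t(\mu)} = e_{t(\mu)}\Lambda$, with the surjection $e_{t(\mu)}\Lambda \onto \mu\Lambda$ sending $e_{t(\mu)}\mapsto \mu$ and hence $\gamma\mapsto \mu\gamma$ for each path $\gamma$ starting at $t(\mu)$. The kernel of this surjection is the span of all $\gamma\in B_Q$ with $s(\gamma)=t(\mu)$ such that $\mu\gamma\in I$. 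So $\Omega(\mu\Lambda)$ is this kernel submodule of $e_{t(\mu)}\Lambda$.

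Next I would identify the minimal generators of that kernel as a right $\Lambda$-module. A path $\gamma$ lies in the kernel iff $\mu\gamma\in I$; since $I$ is monomial with minimal generating set $B_I$, this happens iff $\mu\gamma$ contains some minimal relation as a (consecutive) subpath. A nonzero $\gamma$ in the kernel is a \emph{minimal} such path (i.e.\ no proper right-subpath $\gamma_1$ of $\gamma$ with $\gamma = \gamma' \gamma_1$... rather, $\gamma = \gamma_1\gamma_2$ with $\gamma_2$ nontrivial already in the kernel) precisely when the ``new'' zero relation is created at the very end, i.e.\ when there is a right subpath $\mu'$ of $\mu$ with $\mu'\gamma \in B_I$ and for every proper factorization $\gamma = \gamma_1\gamma_2$ ($\gamma_2$ nontrivial) $\mu\gamma_1\notin I$. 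This is exactly Definition~\ref{def:right completion} of $\gamma$ being a right completion of $\mu$. Hence the kernel $\Omega(\mu\Lambda)$ is generated by $\{\,\nu : \nu \text{ a right completion of }\mu\,\}$, and for such a $\nu$ the cyclic submodule it generates inside $e_{t(\mu)}\Lambda$ is isomorphic to $\nu\Lambda$ (via $\nu \mapsto e_{t(\nu)}$, noting $\nu\Lambda \cong e_{t(\nu)}\Lambda/\mathrm{ann}$ and the annihilator matches because $\nu\delta = 0$ in $\Lambda$ iff $\nu\delta\in I$, which is the same condition inside $e_{t(\mu)}\Lambda$).

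The remaining point is to show that the sum $\sum_{\nu}\nu\Lambda$ inside $e_{t(\mu)}\Lambda$ is actually \emph{direct}, so that each $\nu\Lambda$ is a genuine direct summand of $\Omega(\mu\Lambda)$ and conversely a summand of $\Omega(\mu\Lambda)$ of the form $\nu\Lambda$ forces $\nu$ to be (up to the obvious identification) one of these right completions. Directness follows from the monomial basis: distinct right completions $\nu_1\neq\nu_2$ of $\mu$ are incomparable as paths (neither is a right-subpath of the other, by the minimality clause in Definition~\ref{def:right completion} applied to whichever would be shorter), so the path bases $\{\nu_1\delta\}$ and $\{\nu_2\delta'\}$ of the respective cyclic submodules are disjoint subsets of the path basis of $e_{t(\mu)}\Lambda$; any path in the kernel is divisible on the left by some right completion, which pins down which summand it lies in. For the converse direction of the ``if and only if'', I would argue that if $\nu\Lambda \dirsum \Omega(\mu\Lambda)$ with $\nu$ a nonzero path of length $\geq 1$, then in particular $\nu$ (as the generator of that summand) lies in the kernel and its image generates a direct summand; comparing tops, $\nu$ must agree with one of the right-completion generators listed above, giving that $\nu$ is a right completion of $\mu$.

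I expect the main obstacle to be the bookkeeping in the directness/converse step: carefully arguing that the cyclic submodules generated by distinct right completions intersect trivially (using incomparability of the paths and the monomial basis), and that a decomposition of $\Omega(\mu\Lambda)$ into indecomposables of the form $p\Lambda$ must have summands exactly the $\nu\Lambda$ for right completions $\nu$ — i.e.\ that the generating set of right completions is irredundant. Everything else is a direct unwinding of the definitions of $\mu\Lambda$, projective cover, and right completion over a monomial algebra.
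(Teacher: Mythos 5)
Your proposal is correct and follows essentially the same route as the paper's proof: compute the kernel of the projective cover $P_{t(\mu)}\onto\mu\Lambda$ via the monomial path basis, identify its minimal generators as exactly the right completions of $\mu$, and conclude that $\Omega(\mu\Lambda)$ decomposes as the direct sum of the corresponding cyclic modules $\nu\Lambda$. Your extra care with the directness of the sum (incomparability of distinct right completions) and the converse via tops only makes explicit what the paper leaves implicit.
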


\begin{proof}
Consider the short exact sequence
\[
0 \to \Omega(\mu\Lambda) \to P_{t(\mu)} \to \mu\Lambda \to 0.
\]

As $\k$-vector spaces, $\mu\Lambda$ has the basis $B_{\mu\Lambda} = \{\omega\in B_Q \mid \mu\omega\notin I\}$, and $P_{t(\mu)}$ has the basis $B_{P_{t(\mu)}} = \{\omega\in B_Q \mid s(\omega) = t(\mu) \}$. It is clear that $P_{t(\mu)}$ maps onto $\mu\Lambda$, and the $\k$-basis of the kernel $\Omega(\mu\Lambda)$ is
\begin{equation}
\label{eq:basis of syzygy of right ideal}
B_{\Omega(\mu\Lambda)} = \{ \nu\in B_Q \mid s(\nu)=t(\mu), \mu\nu\in I \}.
\end{equation}

From \eqref{eq:basis of syzygy of right ideal}, we note that the generators of $\Omega(\mu\Lambda)$ as a $\Lambda$-module are those $\omega r\in B_{P_{t(\mu)}}$ such that $\mu\omega r\in I$ but $\mu\omega\notin I$ for some path $\omega$ and arrow $r$. Therefore, we can rewrite
\begin{equation}
\label{eq:final basis of syzygy of right ideal}B_{\Omega(\mu\Lambda)} = \{\omega r x\in B_{P_{t(\mu)}} \mid x\in Q_1, \mu\omega\notin I, \mu\omega r\in I \text{ for some arrow $r$} \},
\end{equation}
so as a $\Lambda$-module $\Omega(\mu\Lambda)$ is exactly the direct sum of all $(\omega r)\Lambda$ where $\omega r$ is a right completion of $\mu$.
\end{proof}

In Lemma \ref{lem:syzygy of principal ideals}, if $\mu\Lambda$ is already projective, then by definition $\mu$ has no right completions, so indeed $\nu\Lambda$ is zero. On the other hand, if $\mu$ has no right completion, then $\mu\Lambda$ is projective. Since $\dell\Lambda$ bounds the finitistic dimension of the opposite algebra, we need to study how a sequence of right completions and its reverse are related to each other. This is discussed in Lemma \ref{lem:existence of module with pd equal dell}. To avoid confusion about which algebra we are working over, we default to using right finitistic dimensions $\findim\Lambda$ and $\Findim\Lambda$, where paths are denoted with lowercase letters in $Q$, and left delooping level $\dell\Lambda^{\op}$, where every path has a tilde above the letter in $Q^{\op}$.

Suppose we have a monomial algebra $\Lambda=\k Q/I$. Given a sequence of right completions in $Q^{\op}$ with $n\geq 1$
\begin{equation}
\label{eq:generic sequence of right completions}
\tilde{\mu}=\widetilde{p_0}\widetilde{p_1}\cdots \widetilde{p_n},
\end{equation}
there is a unique way to factor each path $\widetilde{p_i}$ into $\widetilde{x_i}\widetilde{y_i}$ such that $\widetilde{y_i}\widetilde{x_{i+1}}\widetilde{y_{i+1}}$ for $i=0,1,\dots,n-1$ and $\widetilde{y_{n-1}}\widetilde{p_n}$ are minimal relations in $\tilde{I}$. The paths $\widetilde{y_i}$ are never trivial, and $\widetilde{x_i}$ can be the trivial path. We rewrite the factored version of \eqref{eq:generic sequence of right completions} as
\begin{equation}
\label{eq:generic sequence of right completions factored}
\widetilde{x_0}\widetilde{y_0}\widetilde{x_1}\widetilde{y_1}\cdots \widetilde{x_{n-1}}\widetilde{y_{n-1}} \widetilde{p_n}.
\end{equation}
The reverse of \eqref{eq:generic sequence of right completions factored} in $Q$ is
\begin{equation}
\label{eq:generic sequence of right completions reversed}
p_n y_{n-1}x_{n-1}\cdots y_1x_1y_0x_0.
\end{equation}
Note that even if the sequence in \eqref{eq:generic sequence of right completions} is maximal, the reversed sequence \eqref{eq:generic sequence of right completions reversed} may not be left maximal or right maximal due to the possibility of multiple arrows going in and out of each vertex. However, we know for sure that the $\Lambda$-module $M=\Lambda/p_n\Lambda$ has projective dimension at least $n+1$ by identifying all possible minimal relations that can occur in \eqref{eq:generic sequence of right completions reversed}.

\begin{lemma}
\label{lem:existence of module with pd equal dell}
Using the notation in \eqref{eq:generic sequence of right completions}, \eqref{eq:generic sequence of right completions factored}, and \eqref{eq:generic sequence of right completions reversed}, there are at least $n$ minimal relations in \eqref{eq:generic sequence of right completions reversed}, and $\pd\!_{\Lambda} (\Lambda/p_n\Lambda) \geq n+1$.
\end{lemma}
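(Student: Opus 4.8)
The plan is to prove the two assertions separately.

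For the count of minimal relations, I would transport everything across reversal. Reversal of paths is an anti‑isomorphism $\k Q^{\op}/\tilde I \to (\k Q/I)^{\op}$ that carries $B_{\tilde I}$ bijectively onto $B_I$; in particular a path is a minimal relation of $\tilde I$ exactly when its reverse is a minimal relation of $I$. The $n$ minimal relations $\widetilde{y_i}\widetilde{p_{i+1}}$ ($i=0,\dots,n-1$) witnessing the consecutive right completions occur as subpaths of the factored sequence \eqref{eq:generic sequence of right completions factored}, so their reverses are $n$ minimal relations of $I$ occurring as subpaths of \eqref{eq:generic sequence of right completions reversed}; explicitly they are $r_1=p_ny_{n-1}$ and $r_k=p_{n-k+1}y_{n-k}$ for $k=2,\dots,n$ (recall $p_i=y_ix_i$ for $i<n$). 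Since these windows are pairwise distinct — their right endpoints, say, being strictly increasing — at least $n$ minimal relations occur in \eqref{eq:generic sequence of right completions reversed}.

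For the inequality $\pd(\Lambda/p_n\Lambda)\geq n+1$, I would first reduce it. As $p_n$ is a nontrivial nonzero path, $p_n\Lambda$ is a proper nonzero submodule of $P_{s(p_n)}$; discarding the projective summand $\bigoplus_{v\neq s(p_n)}P_v$ of $\Lambda/p_n\Lambda$, the sequence $0\to p_n\Lambda\to P_{s(p_n)}\to\Lambda/p_n\Lambda\to 0$ gives $\Omega^{n+1}(\Lambda/p_n\Lambda)=\Omega^{n}(p_n\Lambda)$, so it is enough to show $\Omega^{n}(p_n\Lambda)\neq 0$. By Lemma \ref{lem:syzygy of principal ideals}, together with the fact that $\Omega$ commutes with direct summands, a chain of right completions $p_n=q_0,q_1,\dots,q_n$ in $Q$ (each $q_{i+1}$ a right completion of $q_i$) produces $q_n\Lambda\dirsum\Omega^{n}(p_n\Lambda)$, and since $q_{n-1}$ then has a right completion, $q_{n-1}\Lambda$ is non‑projective, forcing $\Omega^{n}(p_n\Lambda)\neq 0$. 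So the problem becomes: build a length‑$n$ chain of right completions beginning at $p_n$.

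I would build this chain greedily inside the reversed path $w:=p_np_{n-1}\cdots p_0$ of \eqref{eq:generic sequence of right completions reversed}: put $q_0=p_n$, and let $q_i$ be the right completion of $q_{i-1}$ that continues along $w$ from where $q_{i-1}$ stops. Writing $[e_{i-1},e_i]$ for the positions $q_i$ occupies in $w$ (so $e_0=|p_n|$) and $[S_k,E_k]$ for those of $r_k$, the invariant I would carry for $i=1,\dots,n$ is: $q_i$ is a genuine right completion of $q_{i-1}$ (nonempty, not in $I$), $e_i\le E_i$, and — for $i\le n-1$ — $e_i>S_{i+1}$. The base case $i=1$ is clear: $q_1=y_{n-1}$ since $p_ny_{n-1}=r_1$ is a minimal relation (every proper prefix of $y_{n-1}$ appended to $p_n$ stays a proper prefix of $r_1$), and $e_1=E_1>S_2$ because $\widetilde{y_{n-1}}$ is nontrivial. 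In the inductive step: $e_{i+1}\le E_{i+1}$ because $S_{i+1}\ge E_{i-1}\ge e_{i-1}$, so $w[e_{i-1},E_{i+1}]$ contains $r_{i+1}$; $e_{i+1}>e_i$, so $q_{i+1}$ is nonempty, because otherwise a minimal relation would sit inside $q_i\notin I$; $q_{i+1}\notin I$ because the minimal relation forcing $e_{i+1}$ must start strictly before $e_i$ — indeed $w[e_i,E_{i+1}]$ reverses to a proper prefix of the minimal relation $\widetilde{y_{n-i-1}}\widetilde{p_{n-i}}$, hence lies outside $I$; and $e_{i+1}>S_{i+2}$ because $w[e_{i-1},S_{i+2}]$ reverses to $\widetilde{p_{n-i}}$ followed by a proper prefix of $\widetilde{p_{n-i+1}}$, which avoids $\tilde I$ by the minimality clause defining the right completion $\widetilde{p_{n-i+1}}$ of $\widetilde{p_{n-i}}$.

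The main obstacle is exactly keeping this invariant alive. The subtle point is that a right completion may overshoot, jumping over chunks of the $x_j$'s and $y_j$'s because of extra minimal relations buried in $w$, so one cannot simply take $q_i=x_{n-i+1}y_{n-i}$; instead one must show the greedy choice never lands in $I$ and never stalls before reaching the next relation $r_{i+1}$. What rescues the argument is the interplay of three facts: consecutive relations $r_k$ and $r_{k+1}$ overlap precisely in the segment $y_{n-k}$; each $\widetilde{y_j}$ is nontrivial; and each $\widetilde{x_{j+1}}$ is a proper prefix of the right completion $\widetilde{p_{j+1}}$ of $\widetilde{p_j}$, so that $\widetilde{p_j}$ followed by any proper prefix of $\widetilde{p_{j+1}}$ avoids $\tilde I$. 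Read through the reversal, these force the greedy chain to advance strictly past the $y$‑segment of each block of $w$ at every step, which is simultaneously what keeps each $q_i$ out of $I$ and what keeps the next relation within reach; this closes the induction, yields the chain of length $n$, and hence $\pd(\Lambda/p_n\Lambda)\ge n+1$.
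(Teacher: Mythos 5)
Your argument is the same in substance as the paper's: reverse the sequence, read off the $n$ minimal relations $p_ny_{n-1},\,y_{n-1}x_{n-1}y_{n-2},\dots,y_1x_1y_0$, and follow a chain of right completions of $p_n$ along the reversed word via Lemma \ref{lem:syzygy of principal ideals} to force $\pd\!_{\Lambda}(\Lambda/p_n\Lambda)\geq n+1$; where the paper controls interference from extra minimal relations by showing none can start inside an $\widetilde{x_i}$, you achieve the same control through your positional invariant and the minimality clause in the definition of right completion. One repair is needed: the strict inequality $e_i>S_{i+1}$ in your invariant already fails in the base case when $y_{n-1}$ is a single arrow (then $e_1=E_1=S_2$), and similarly $e_0=S_1$ when $p_n$ is an arrow; the correct invariant is the weak inequality $e_i\geq S_{i+1}$, which is all your two uses require --- $w[e_i+1,E_{i+1}]$ is still a proper right subpath of $r_{i+1}$ (hence $q_{i+1}\notin I$), and $w[e_{i-1}+1,S_{i+2}-1]$ still reverses to $\widetilde{p_{n-i}}$ followed by a proper prefix of $\widetilde{p_{n-i+1}}$, hence avoids $\tilde I$.
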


\begin{proof}
It is clear that the numbers of minimal relations in \eqref{eq:generic sequence of right completions factored} and \eqref{eq:generic sequence of right completions reversed} are the same and that there are at least $n$ minimal relations in \eqref{eq:generic sequence of right completions reversed}, which are $p_ny_{n-1}$, $y_{n-1}x_{n-1}y_{n-2}, \dots, y_1x_1y_0$. There may be more minimal relations in \eqref{eq:generic sequence of right completions factored}. If there are additional minimal relations in \eqref{eq:generic sequence of right completions factored}, they cannot start with any arrow in $\widetilde{x_i}$ for $i=0,1,\dots,n-1$. Without the loss of generality, suppose $\widetilde{x_0}$ is nontrivial and for a contradiction that there is a minimal relation starting with some arrow of $\widetilde{x_0}$. The terminal arrow of the relation cannot be in or before $\widetilde{y_0}$ since $\widetilde{x_0}\widetilde{y_0}$ is nonzero. The terminal arrow cannot be the last arrow of or after $\widetilde{y_1}$ since $\widetilde{y_0}\widetilde{x_1}\widetilde{y_1}$ is a minimal relation. Lastly, if the terminal arrow is in $\widetilde{x_1}$ or $\widetilde{y_1}$ except for the last arrow, then $\widetilde{x_1}\widetilde{y_1}$ is not a right completion of $\widetilde{x_0}\widetilde{y_0}$ and should instead be shorter. Therefore, there is no minimal relation in \eqref{eq:generic sequence of right completions factored} starting with any arrow in $\widetilde{x_i}$. On the other hand, there may be relations whose starting arrow is an arrow in $\widetilde{y_i}$ and terminal arrow before the end of $\widetilde{y_{i+2}}$, but this does not affect the results in the rest of the proof.

Let $N\in\mod\Lambda$ be the non-projective summand of $\Lambda/p_n\Lambda$. If $n\leq 3$, then by applying Lemma \ref{lem:syzygy of principal ideals} with the minimal relations in $B_I$, we get $\Omega_{\Lambda} N = p_n\Lambda$, $\Omega_{\Lambda}^2 N = y_{n-1}x_{n-1}\Lambda$, $\Omega_{\Lambda}^3 N = y_{n-2}\Lambda$, and $\Omega_{\Lambda}^4 N = x_{n-2}y_{n-3}\Lambda$ whenever applicable. For $n\geq 4$, since we showed in the previous paragraph that all other minimal relations in \eqref{eq:generic sequence of right completions reversed} must have their starting and terminal arrows in $y_{i+2}$ and $y_i$ for some $i$, we can describe the higher syzygies of $N$ as $\Omega^j_{\Lambda} N = x_{n-j+2}y_{n-j+1}\Lambda$ for $j=5,\dots,n+1$. Note that $\Omega^j_{\Lambda} N$ is not projective for $j<n+1$ since their generator is a path that has a right completion, so $\pd\!_{\Lambda} N \geq n+1$.
\end{proof}

It is already shown that if $\Lambda$ is a Nakayama algebra \cite{ringel2021finitistic, sen2021delooping} or a monomial algebra whose underlying quiver is acyclic (for example in \cite[Proposition 2.3]{guo2024symmetry} since the algebra has finite global dimension), then
\begin{equation}
\label{eq:all invariants equal}
\ddell\Lambda=\ddell\Lambda^{\op}=\dell\Lambda=\dell\Lambda^{\op}=\findim\Lambda=\findim\Lambda^{\op}=\Findim\Lambda=\Findim\Lambda^{\op}.
\end{equation}
However, we present a different proof for the case of monomial algebras of acyclic quivers that can be generalized to more cases in the next section by demonstrating how the delooping level of a simple module in one algebra corresponds to the projective dimension of modules generated by paths in the opposite algebra.

\begin{proposition}
\label{prop:tree dell findim}
If $Q$ is an acyclic quiver and $\Lambda=\k Q/I$ is a monomial algebra, then 
\[
\gldim\Lambda=\Findim\Lambda=\findim\Lambda = \dell\Lambda^{\op}<\infty.
\]
\end{proposition}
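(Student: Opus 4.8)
The plan is to derive the entire chain of equalities from Lemmas~\ref{lem:syzygy of principal ideals} and~\ref{lem:existence of module with pd equal dell} together with the inequality $\Findim\Lambda\le\dell\Lambda^{\op}$ of \cite{gelinas2022}, using acyclicity only to force every sequence of right completions to have finite length.

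First I would record that $\gldim\Lambda<\infty$. In an acyclic quiver every path visits distinct vertices, so each path has length at most $|Q_0|-1$; in a sequence of right completions $\alpha_0\alpha_1\cdots\alpha_n$ each $\alpha_i$ is a subpath of length $\ge 1$ of the single path $\alpha_0\cdots\alpha_n$, so $n\le|Q_0|-2$. Iterating Lemma~\ref{lem:syzygy of principal ideals} then gives $\Omega^k(p\Lambda)=0$ for $k$ large and every nonzero path $p$; since from the second step on the syzygies of any simple module are direct sums of such $p\Lambda$ (the discussion before Definition~\ref{def:right completion}), every simple has finite projective dimension and hence $\gldim\Lambda<\infty$, and likewise $\gldim\Lambda^{\op}<\infty$. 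Consequently $\findim\Lambda=\Findim\Lambda=\gldim\Lambda$, and, since $\dell M\le\pd M$ whenever $\pd M<\infty$ (take $N=0$ in the definition of $\dell$), also $\dell\Lambda^{\op}<\infty$. It remains to prove $\dell\Lambda^{\op}=\findim\Lambda$, and since $\dell\Lambda^{\op}\ge\Findim\Lambda=\findim\Lambda$ already holds, the substantive inequality is $\dell\Lambda^{\op}\le\findim\Lambda$.

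I would prove this vertex by vertex, exhibiting for each $v$ a \emph{right} $\Lambda$-module of projective dimension at least $\dell S_v$, where $S_v$ is viewed as a $\Lambda^{\op}$-module. Using $\Omega S_v=\rad P_v^{\op}=\bigoplus_{s(\tilde a)=v}\tilde a\Lambda^{\op}$ and iterating Lemma~\ref{lem:syzygy of principal ideals}, one checks that $\pd\!_{\Lambda^{\op}}(\tilde a\Lambda^{\op})$ equals the length of a longest sequence of right completions in $Q^{\op}$ starting with $\tilde a$, so that $d:=\pd\!_{\Lambda^{\op}}S_v = 1+\max_{s(\tilde a)=v}\pd\!_{\Lambda^{\op}}(\tilde a\Lambda^{\op})$ when $v$ emits an arrow in $Q^{\op}$, and $\dell S_v\le d$. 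If $d\le 1$ the inequality $\findim\Lambda\ge\dell S_v$ is clear, so assume $d\ge 2$ and choose a sequence $\tilde\mu=\tilde p_0\tilde p_1\cdots\tilde p_{d-1}$ consisting of $d-1\ge 1$ right completions and realizing this maximum. Reversing $\tilde\mu$ as in \eqref{eq:generic sequence of right completions reversed} and applying Lemma~\ref{lem:existence of module with pd equal dell} (with its ``$n$'' equal to $d-1$) yields the $\Lambda$-module $\Lambda/p_{d-1}\Lambda$ with $\pd\!_{\Lambda}(\Lambda/p_{d-1}\Lambda)\ge d$, a finite number because $\gldim\Lambda<\infty$. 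Hence $\findim\Lambda\ge d\ge\dell S_v$; taking the supremum over $v$ gives $\findim\Lambda\ge\dell\Lambda^{\op}$, and all four quantities coincide.

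The only real friction I anticipate is the index bookkeeping in the last paragraph: one must check that the longest right-completion sequence attached to $S_v$ has exactly $d-1$ right completions, so that Lemma~\ref{lem:existence of module with pd equal dell} returns a module of projective dimension at least $d$ rather than $d-1$, and one must handle the branching of right completions together with the deletion of projective summands so that a genuinely longest branch is chosen. Acyclicity is used in exactly two places here --- to make $\gldim\Lambda$ finite and to ensure the reversed $\Lambda$-module has finite projective dimension --- and it is precisely this hypothesis that is weakened, with more work, in the serial case treated next.
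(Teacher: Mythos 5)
Your proposal is correct and follows essentially the same route as the paper's proof: bound $\dell\!_{\Lambda^{\op}}S_{\tilde v}$ by the length of a longest (right‑maximal) sequence of right completions issuing from $\tilde v$, reverse that sequence in $Q$, and invoke Lemma~\ref{lem:existence of module with pd equal dell} to produce a right $\Lambda$-module of finite projective dimension at least that bound. The only differences are cosmetic — you pass through $\dell S_{\tilde v}\le\pd\!_{\Lambda^{\op}}S_{\tilde v}$ and spell out why $\gldim\Lambda<\infty$, which the paper simply asserts.
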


\begin{proof}
Since $Q$ is an acyclic quiver, $\gldim\Lambda<\infty$, so the statement is automatically true, but we present a proof that can be generalized to other classes of monomial algebras. We will prove $\dell\Lambda^{\op}\leq \findim\Lambda$ since $\findim\Lambda\leq \Findim\Lambda\leq \dell\Lambda^{\op}$. We can assume $\dell\Lambda^{\op}\geq 1$ since it is known that $\Findim\Lambda=0$ if and only if $\dell\Lambda^{\op}=0$, for example in \cite{krause2023symmetry}.

Consider every simple module $S=S_{\tilde{v}}\in\mod\Lambda^{\op}$ supported on some vertex $\tilde{v}$. There exist sequences of right completions corresponding to $S_{\tilde{v}}$ all of the form
\begin{equation}
\label{eq:tree dell findim}
\widetilde{p_0}\widetilde{p_1}\widetilde{p_2}\cdots \widetilde{p_{n-1}}\widetilde{p_n},
\end{equation}
where $\widetilde{p_0}$ is an arrow starting at $\tilde{v}$, $\widetilde{p_{i+1}}$ is a right completion of $\widetilde{p_i}$ for $i=0,1,\dots,n-1$, and $\widetilde{p_n}\Lambda^{\op}$ is projective so that \eqref{eq:tree dell findim} is right maximal. Iterating over all simple $\Lambda^{\op}$-modules and all such sequences of right completions corresponding to them, we pick any longest sequence and write it in the form \eqref{eq:tree dell findim}. This implies $\dell\Lambda^{\op}\leq n+1$. Note that this does not imply $\dell\Lambda^{\op}=n+1$ since earlier syzygies of $S_{\tilde{v}}$ could be more deloopable making its delooping level smaller, but we show later this does not happen and the equality must hold.

To describe the minimal relations in $B_{\tilde{I}}$ that occur in \eqref{eq:tree dell findim}, we factor each path $\widetilde{p_i}$ into a concatenation of two paths as in \eqref{eq:generic sequence of right completions factored}. The sequence \eqref{eq:tree dell findim} becomes
\begin{equation}
\label{eq:tree dell findim factored}
\widetilde{y_0}\widetilde{x_1}\widetilde{y_1}\cdots \widetilde{x_{n-1}}\widetilde{y_{n-1}} \widetilde{p_n},
\end{equation}
where $\widetilde{x_0}$ is always trivial and is therefore omitted because the syzygy of $S_{\tilde{v}}$ is the direct sum of all $\widetilde{y_0}\Lambda^{\op}$ with $\widetilde{y_0}$ an arrow starting from $\tilde{v}$.

We reverse \eqref{eq:tree dell findim factored} to obtain a sequence of right completions in $Q$
\begin{equation}
\label{eq:tree dell findim reversed}
p_n y_{n-1}x_{n-1}\cdots y_1x_1y_0.
\end{equation}
Let $M$ be the non-projective summand of $\Lambda/p_n\Lambda$. By Lemma \ref{lem:existence of module with pd equal dell}, we know $\pd\!_{\Lambda} M\geq n+1$. Since $\pd\!_{\Lambda} M$ is finite, we get
\[
\dell\Lambda^{\op} = \dell\!_{\Lambda^{\op}} S_{\tilde{v}} \leq n+1 \leq \pd\!_{\Lambda} M \leq \findim\Lambda,
\]
completing the proof.
\end{proof}

We reiterate that the result in Proposition \ref{prop:tree dell findim} can be observed by using the property of the delooping level, but the proof above demonstrates intuitively why the equality $\dell\Lambda^{\op}=\Findim\Lambda$ holds through reversing maximal sequences of right completions. The key condition in the proof is that all sequences of right completions there have finite length since every module has finite projective dimension. We extend the idea of this proof to left and right serial algebras in the next section. Proposition \ref{prop:tree dell findim} also recovers the result that $\findim\Lambda=\Findim\Lambda$ if $\Lambda$ is a monomial algebra whose quiver is acyclic and that the finitistic dimension can be achieved by the quotient of a projective by a principal ideal generated by some path in the quiver. 

\section{Left and Right Serial Path Algebras}
\label{sec:left and right serial path algebras}

We begin the section by describing the quivers of left and right serial algebras.

\begin{lemma}
Suppose every vertex in a connected quiver has outdegree at most one. If the quiver has a cycle, then the cycle must have straight orientation. The quiver has at most one cycle. The same is true if every vertex in a connected quiver has indegree at most one.
\end{lemma}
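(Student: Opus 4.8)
The plan is to work almost entirely with the underlying undirected multigraph $\bar{Q}$ of $Q$ and to bring in the orientations only at the last step. Throughout, ``cycle'' will mean a simple cycle of $\bar{Q}$ (a loop counting as a cycle of length one), and ``straight orientation'' will mean that all edges of the cycle are oriented consistently around it, i.e.\ that it is an oriented cycle of $Q$. I will treat only the hypothesis ``every vertex has outdegree at most one''; the indegree case follows immediately by applying that case to $Q^{\op}$, which has the same underlying graph, is connected exactly when $Q$ is, has the same cycles, each straight in $Q^{\op}$ if and only if straight in $Q$, and in which every vertex has outdegree at most one. The first observation is a counting one: the source map $s\colon Q_1\to Q_0$ is injective precisely because each vertex is the source of at most one arrow, so $|Q_1|\le|Q_0|$; thus $\bar{Q}$ is a connected graph with at most as many edges as vertices.

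For the ``at most one cycle'' claim I would argue via a spanning tree. Choose a spanning tree $T$ of $\bar{Q}$; it has $|Q_0|-1$ edges, so at most one edge $e=\{u,v\}$ of $\bar{Q}$ lies outside $T$. If there is no such edge then $\bar{Q}=T$ is a tree and has no cycle. Otherwise, any simple cycle of $\bar{Q}$ must use an edge outside $T$ (a tree contains no cycle), hence must use $e$; deleting $e$ from such a cycle leaves a simple path from $u$ to $v$ using only edges of $T$, and in a tree this path is unique. Therefore the cycle is forced to be the fundamental cycle of $e$, and $\bar{Q}$ has at most one cycle.

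For straightness, let $C\colon v_0 - v_1 - \cdots - v_{k-1} - v_0$ be a cycle with $k\ge 2$ (a loop needs no argument), traversed in this cyclic order, and label each edge ``forward'' or ``backward'' according to whether it points along or against the direction of traversal. If the cyclic sequence of labels is not constant, then at some vertex $v_i$ a backward edge is immediately followed by a forward edge; then both cycle-edges at $v_i$ point away from $v_i$, forcing $v_i$ to have outdegree at least two, a contradiction. Hence all labels agree, so $C$ is an oriented cycle, i.e.\ has straight orientation. I expect the only point needing care to be the second step: one must ensure that ``at most one edge outside a spanning tree'' really yields a unique \emph{simple} cycle and not merely a one-dimensional cycle space, which is exactly what the uniqueness of paths in a tree supplies. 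Steps one and three are routine once the framing is set up --- injectivity of $s$ for the count, and the local source/sink analysis along a traversed cycle for straightness.
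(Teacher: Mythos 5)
Your proof is correct. The straightness argument is the same as the paper's: if the orientation of a cycle is not constant, some vertex on the cycle is the source of two of its edges, violating the outdegree hypothesis (the paper states this in one sentence as ``clear''). Where you genuinely diverge is in the ``at most one cycle'' claim. The paper argues locally: two cycles in a connected quiver must be joined, and a vertex joining them would have indegree or outdegree exceeding one --- an argument that, as written, glosses over the case of vertex-disjoint cycles connected only by a path, where one has to chase orientations along the connecting path to locate the offending vertex. You instead observe that injectivity of the source map gives $|Q_1|\le|Q_0|$, so the underlying connected multigraph has cycle rank at most one, and the unique non-tree edge together with uniqueness of paths in a spanning tree forces a unique simple cycle. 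This global counting argument is cleaner and handles the disjoint-cycle case automatically; its only cost is the (harmless) passage to the underlying multigraph and the need to be careful, as you note, that a one-dimensional cycle space really yields a unique simple cycle. The reduction of the indegree case to the outdegree case via $Q^{\op}$ is also fine and matches the paper's implicit symmetry.
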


\begin{proof}
For both cases, it is clear that all cycles in the quiver must have straight orientation to keep all indegrees or outdegrees at most one. If the quiver has two cycles or more, then the vertices that connect the cycles will have indegree or outdegree larger than one.
\end{proof}

Therefore, the quiver of a left serial path algebra is either a tree or a cycle with straight orientation in which each vertex can have additional incoming arrows that are part of a tree, and the trees are not connected to each other by any arrow. Similarly, the quiver of a right serial path algebra is the same except there can be additional outgoing arrows out of each vertex in the cycle. If there is an oriented cycle in the quiver of a left or right serial path algebra, we call the vertices in the cycle the \textbf{cyclic part} of the quiver. The other vertices are called the \textbf{tree part} of the quiver.

We extend the validity of $\dell\Lambda^{\op}=\ddell\Lambda^{\op}=\Findim\Lambda$ to right serial path algebras $\Lambda$ and show that the two upper bounds are not necessarily tight if $\Lambda$ is a left serial algebra. We prove the result about right serial path algebras in Theorem \ref{thm:right serial} and recover the result that the right finitistic dimensions $\findim\Lambda=\Findim\Lambda$ are equal if $\Lambda$ is right serial \cite{huisgen1992syzygies}. Note that the convention for path concatenation in \cite{huisgen1992syzygies} is the opposite to ours, so their results for left serial algebras are for right serial algebras in our context. The author in \cite{huisgen1992syzygies} also provides a method to calculate the finitistic dimensions through uniserial ideals and points out that the calculation only depends on the quiver and the relations. Our conclusion agrees with these statements and at the same time shows the delooping level and the derived delooping level serve as another tractable way to calculate the finitistic dimensions.

\begin{theorem}
\label{thm:right serial}
Let $\Lambda=\k Q/I$ be a right serial path algebra. Then the right little and big finitistic dimensions of $\Lambda$ are equal to the left delooping level and derived delooping level of $\Lambda$, \textit{i.e.},
\[
\findim\Lambda = \Findim\Lambda = \dell\Lambda^{\op} = \ddell\Lambda^{\op} < \infty.
\]
\end{theorem}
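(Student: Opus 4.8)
The plan is to reduce to two inequalities and then extend the argument of Proposition~\ref{prop:tree dell findim}. Applying the bound $\Findim\Lambda^{\op}\le\ddell\Lambda\le\dell\Lambda$ to the algebra $\Lambda^{\op}$, together with $\findim\Lambda\le\Findim\Lambda$, gives for free
\[
\findim\Lambda\le\Findim\Lambda\le\ddell\Lambda^{\op}\le\dell\Lambda^{\op}.
\]
So it suffices to prove that $\dell\Lambda^{\op}<\infty$ and that $\dell\Lambda^{\op}\le\findim\Lambda$: these collapse the whole chain to equalities and make all four quantities finite, recovering in particular the equality $\findim\Lambda=\Findim\Lambda$ of \cite{huisgen1992syzygies}.

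Proposition~\ref{prop:tree dell findim} is proved, for each simple $\Lambda^{\op}$-module $S_{\tilde v}$, by describing the iterated syzygies $\Omega^{k}S_{\tilde v}$ through sequences of right completions in $Q^{\op}$ (Lemma~\ref{lem:syzygy of principal ideals}), choosing a longest maximal such sequence, of length $n$ say, so that $\dell\Lambda^{\op}\le n+1$, and reversing it into $Q$ to obtain, via Lemma~\ref{lem:existence of module with pd equal dell}, a module $\Lambda/p\Lambda$ of projective dimension at least $n+1$; in the acyclic case this module automatically has finite projective dimension, whence $\dell\Lambda^{\op}\le n+1\le\findim\Lambda$. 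For right serial $\Lambda$ the quiver $Q^{\op}$ may contain a single straight-oriented cycle, so two points need new arguments: (i) a sequence of right completions in $Q^{\op}$ may wind forever around the cyclic part rather than terminating; and (ii) a witnessing sequence must reverse into a sequence in $Q$ whose associated module still has \emph{finite} projective dimension.

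For (i) I would invoke the $2$-syzygy finiteness of right serial path algebras --- equivalently the structural fact, established above, that $Q$ has at most one straight-oriented cycle. Then only finitely many modules $\tilde p\Lambda^{\op}$ occur as generators of second syzygies over $\Lambda^{\op}$, and the right-completion relation makes this finite set into a finite directed graph, so every sequence of right completions either terminates at a projective or eventually enters a directed cycle $\tilde p_{0}\to\tilde p_{1}\to\dots\to\tilde p_{m-1}\to\tilde p_{0}$ of that graph. For such a cycle, $\tilde p_{0}\Lambda^{\op}\dirsum\Omega^{m}(\tilde p_{0}\Lambda^{\op})$, hence $\tilde p_{0}\Lambda^{\op}\dirsum\Omega^{j}N_{j}$ for every $j$, so $\tilde p_{0}\Lambda^{\op}$ is infinitely deloopable with $\dell(\tilde p_{0}\Lambda^{\op})=0$. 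Consequently $\Omega^{k}S_{\tilde v}$ becomes, after a bounded number of steps, a direct sum of projectives and infinitely deloopable modules --- those whose generating paths have reached the cyclic regime --- so $\dell S_{\tilde v}<\infty$; the supremum over the finitely many simples gives $\dell\Lambda^{\op}<\infty$, realized by a right-completion sequence in $Q^{\op}$ whose transient (pre-periodic) part is as long as possible.

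The main obstacle is (ii): one must show that the reversal into $Q$ of such a transient witnessing sequence is again a \emph{terminating} sequence of right completions, so that the module $\Lambda/p\Lambda$ produced by Lemma~\ref{lem:existence of module with pd equal dell} has finite projective dimension and hence $\dell\Lambda^{\op}\le\findim\Lambda$. This is exactly where right seriality, rather than left seriality, is used: for right serial $\Lambda$ the transient sequences of $Q^{\op}$ reverse into a part of $Q$ from which every sequence of right completions escapes to a projective, whereas for left serial $\Lambda$ a transient sequence can reverse into one that spirals into the cyclic part of $Q$ and yields a module of infinite projective dimension --- which is why the equality $\Findim\Lambda=\dell\Lambda^{\op}$ needs an extra hypothesis in the left serial case, and why Example~\ref{ex:right serial} can satisfy $\findim\Lambda=\Findim\Lambda=\ddell\Lambda^{\op}<\dell\Lambda$. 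Once (ii) is in place, the finiteness and the chain of equalities follow as above, and $\findim\Lambda=\Findim\Lambda$ drops out.
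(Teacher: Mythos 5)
Your reduction is the same as the paper's: the chain $\findim\Lambda\le\Findim\Lambda\le\ddell\Lambda^{\op}\le\dell\Lambda^{\op}$ is free, finiteness holds because right serial path algebras are monomial (your cycle argument in the finite graph of right completions is a correct way to see this), and everything hinges on $\dell\Lambda^{\op}\le\findim\Lambda$, which you attack, as the paper does, by reversing a witnessing sequence of right completions from $Q^{\op}$ into $Q$. But your step (ii) is exactly where the proof has to happen, and you only assert its conclusion: the claim that ``for right serial $\Lambda$ the transient sequences of $Q^{\op}$ reverse into a part of $Q$ from which every sequence of right completions escapes to a projective'' is not an a priori structural fact about right serial quivers, and no argument is given for it. The paper's actual mechanism is a proof by contradiction: writing the reversed sequence as $p_ny_{n-1}x_{n-1}\cdots y_1x_1y_0$, one shows that $x_1y_0\Lambda$ must be projective in $\mod\Lambda$, because if it had a right completion $p$ in $Q$, then reversing back would prepend $\tilde{p}$ to the sequence in $Q^{\op}$ and exhibit the relevant syzygy of $S_{\tilde v}$ as a direct summand of a strictly deeper syzygy (of $\Lambda^{\op}/\tilde p\Lambda^{\op}$), contradicting the non-deloopability encoded in $\dell_{\Lambda^{\op}}S_{\tilde v}=\dell\Lambda^{\op}$. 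Right seriality then enters a second time, to guarantee that the reversed chain in $Q$ is the \emph{unique} one starting at $p_n$ (each vertex of $Q$ has outdegree at most one), so $\pd_\Lambda(\Lambda/p_n\Lambda)=n+1$ exactly. Neither of these two ingredients appears in your proposal.

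A related, smaller defect: you choose the witnessing sequence as one ``whose transient (pre-periodic) part is as long as possible.'' As the paper points out after its equation \eqref{eq:tree dell findim}, the length of a right-completion sequence only bounds $\dell S_{\tilde v}$ from \emph{above}, since earlier syzygies could be more deloopable for reasons invisible to that single chain. For the contradiction argument above to run, the sequence must be chosen to record the precise statement that $\Omega^i_{\Lambda^{\op}}S$ is not $(i+1)$-deloopable for $i\le n$ while $\Omega^{n+1}_{\Lambda^{\op}}S$ is $(n+2)$-deloopable; with your ``longest transient'' choice, the existence of a right completion of $x_1y_0$ would merely produce a longer sequence not anchored at a simple module, and no contradiction results. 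So the architecture is right, but the key step is missing rather than merely sketched.
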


\begin{proof}
Since right serial path algebras are monomial, the four quantities are all finite. Also note that each vertex in $Q$ (resp. $Q^{\op}$) has at most one outgoing (resp. incoming) arrow. As in the proof of Proposition \ref{prop:tree dell findim}, it suffices to show $\dell\Lambda^{\op}\leq\findim\Lambda$. 

Let $S=S_{\tilde{v}}$ be a simple module in $\mod\Lambda^{\op}$ such that $\dell\!_{\Lambda^{\op}} S = \dell\Lambda^{\op}$. We know $\findim\Lambda=\Findim\Lambda=0$ if and only if $\dell\Lambda^{\op}=0$, so the statement is true when $\dell\Lambda^{\op}$ is 0 or 1. We will prove the statement for $\dell\Lambda^{\op}\geq 3$ (corresponding to $n\geq 2$ in \eqref{eq:right serial right completion factored}), so the remaining $\dell\Lambda^{\op}=2$ case will follow. Let $\widetilde{y_0}$ be any arrow starting at $\tilde{v}$ so that there exists a sequence of right completions starting with $\widetilde{y_0}$ corresponding to the information that $\Omega_{\Lambda^{\op}}^i S$ is not $(i+1)$-deloopable for $i\leq n$ and $\Omega_{\Lambda^{\op}}^{n+1} S$ is at least $(n+2)$-deloopable
\begin{equation}
\label{eq:right serial right completion factored}
\widetilde{y_0}\widetilde{x_1}\widetilde{y_1}\widetilde{x_2}\widetilde{y_2} \cdots \widetilde{x_{n-1}}\widetilde{y_{n-1}}\widetilde{p_n}.
\end{equation}

The sequence \eqref{eq:right serial right completion factored} is factored in the same way as \eqref{eq:tree dell findim factored} in the proof of Proposition \ref{prop:tree dell findim}, where $\widetilde{y_{n-1}}\widetilde{p_n}$ and $\widetilde{y_i}\widetilde{x_{i+1}}\widetilde{y_{i+1}}$ for $i=0,1,\dots,n-2$ are minimal relations in $B_{\tilde{I}}$. We know $\widetilde{y_0}\Lambda^{\op}$ is a direct summand of $\Omega_{\Lambda^{\op}} S$, $\widetilde{x_i}\widetilde{y_i}\Lambda^{\op}$ is a direct summand of $\Omega_{\Lambda^{\op}}^{i+1} S$ for $i=1,\dots,n-1$, and $\widetilde{p_n}\Lambda^{\op}$ is a direct summand of $\Omega_{\Lambda^{\op}}^{n+1} S$.

Consider the reverse of \eqref{eq:right serial right completion factored} in $Q$ written as
\begin{equation}
\label{eq:right serial right completion reversed}
p_n y_{n-1}x_{n-1}\cdots y_1x_1y_0.
\end{equation}
We show that $x_1y_0\Lambda$ is always projective in $\mod\Lambda$. First, if $n=2$, $\dell\Lambda^{\op}=3$, but $x_1y_0\Lambda$ is not projective, then there exists a right completion $p$ for $x_1y_0$ in $Q$. We factor $x_1=z_1z_2$ such that $z_2y_0p$ is a minimal relation in $B_I$. Then we have a sequence of right completions $p_2y_1z_1z_2y_0p$ in $Q$, and its reverse in $Q^{\op}$ is
\begin{equation}
\label{eq:right serial dell=3 sequence}
\widetilde{p}\widetilde{y_0}\widetilde{z_2}\widetilde{z_1}\widetilde{y_1}\widetilde{p_2},
\end{equation}
where the minimal relations are $\widetilde{p}\widetilde{y_0}\widetilde{z_2}$, $\widetilde{y_0}\widetilde{z_2}\widetilde{z_1}\widetilde{y_1}$, and $\widetilde{y_1}\widetilde{p_2}$. So, we find that $\widetilde{z_1}\widetilde{y_1}\Lambda^{\op}$ is a summand of $\Omega^3_{\Lambda^{\op}} (\Lambda^{\op}/\widetilde{p}\Lambda^{op})$. On the other hand, $\widetilde{z_2}\widetilde{z_1}\widetilde{y_1}\Lambda^{\op}$ is a summand of $\Omega^2_{\Lambda^{\op}} S_{\tilde{v}}$. We showed in the proof of Lemma \ref{lem:existence of module with pd equal dell} that there is no minimal relation in \eqref{eq:right serial dell=3 sequence} starting from any arrow in $\widetilde{z_2}\widetilde{z_1}$, so the summands of $\widetilde{z_2}\widetilde{z_1}\widetilde{y_1}\Lambda^{\op}$ and $\widetilde{z_1}\widetilde{y_1}\Lambda^{\op}$ that are supported in the vertices in \eqref{eq:right serial dell=3 sequence} are equal. We can apply this argument to any such sequence in the form \eqref{eq:right serial dell=3 sequence} where $\widetilde{y_0}$ is an arrow going out of $\tilde{v}$. This shows $\dell\!_{\Lambda^{\op}} S_{\tilde{v}} \leq 2$, contradicting the assumption.

Now we continue to the case $n\geq 3$ and $\dell\Lambda^{\op}\geq 4$. Suppose for a contradiction that $x_1y_0\Lambda$ is not projective. Then in the same way there exist a nonzero path $p$ and a factorization $x_1=z_1z_2$ such that $z_2y_0p$ is a minimal relation in $B_I$. Reversing to $Q^{\op}$, we get the sequence of right completions
\begin{equation}
\label{eq:right serial right completion factored additional term}
\widetilde{p}\widetilde{y_0}\widetilde{z_2}\widetilde{z_1}\widetilde{y_1}\widetilde{x_2}\widetilde{y_2} \cdots \widetilde{x_{n-1}}\widetilde{y_{n-1}}\widetilde{p_n}.
\end{equation}
Let $M$ be the non-projective summand of $\Lambda^{\op}/\tilde{p}\Lambda^{\op}\in\mod\Lambda^{\op}$. We get that the summand $\widetilde{x_2}\widetilde{y_2}\Lambda^{\op}$ of $\Omega_{\Lambda^{\op}}^3 S$ is a summand of the fourth syzygy of $M$. This also contradicts the assumption that $\dell\!_{\Lambda^{\op}} S\geq 4$. Therefore, $x_1y_0\Lambda$ is always projective in $\mod\Lambda$.

Let $N\in\mod\Lambda$ be the non-projective summand of $\Lambda/p_n\Lambda$. Note that since $\Lambda$ is right serial, there is no other sequence of right completions starting with any arrow in \eqref{eq:right serial right completion reversed} except for the sequence \eqref{eq:right serial right completion reversed} itself. From the relations in \eqref{eq:right serial right completion reversed}, we find that $\Omega_{\Lambda}^n N = x_2y_1\Lambda$ is not projective, but $\Omega_{\Lambda}^{n+1} N= x_1y_0\Lambda$ is projective. Thus, we complete the proof because
\[
\findim\Lambda \geq \pd\!_{\Lambda} N = n+1 = \dell\Lambda^{\op}\geq\ddell\Lambda^{\op}\geq\Findim\Lambda\geq\findim\Lambda.
\]
\end{proof}

\begin{remark}
Note that in the proof of Theorem \ref{thm:right serial}, we did not need to explicitly prove the case $\dell\Lambda^{\op}=2$. In that case, the sequence of right completions in $Q^{\op}$ is simply $\widetilde{y_0}\widetilde{p_1}$. Using the same argument as in the proof, we can show $y_0\Lambda$ must be projective in $\mod\Lambda$ to ensure $\dell\Lambda^{\op} > 1$. Since the argument is essentially verbatim, we did not include it in the proof, but the same connection between the delooping level and projective dimension still exists for $\dell\Lambda^{\op}=2$.
\end{remark}

The theorem immediately shows a type of modules whose projective dimension equals the finitistic dimension.

\begin{corollary}
If $\Lambda=\k Q/I$ is a right serial path algebra, then the right finitistic dimensions $\findim\Lambda$ and $\Findim\Lambda$ are equal. In particular, this number can be achieved by the projective dimension of a finitely generated module of the form $\Lambda/p\Lambda$ for some path $p$.
\end{corollary}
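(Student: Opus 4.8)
The corollary is essentially a repackaging of Theorem~\ref{thm:right serial} together with the explicit module produced in its proof, so the plan is short. The equality $\findim\Lambda=\Findim\Lambda$ is already one link in the displayed chain of equalities of Theorem~\ref{thm:right serial}, and nothing further is needed for the first assertion. For the second assertion, I would simply revisit the last paragraph of that proof and observe that the witnessing module was produced in the required form.

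Concretely, in the proof of Theorem~\ref{thm:right serial} one starts from a simple $\Lambda^{\op}$-module $S_{\tilde v}$ with $\dell\!_{\Lambda^{\op}}S_{\tilde v}=\dell\Lambda^{\op}$ and an associated right-maximal sequence of right completions, then reverses that sequence inside $Q$ to obtain the path $p_n$ appearing in \eqref{eq:right serial right completion reversed}, and sets $N$ to be the non-projective summand of $\Lambda/p_n\Lambda$. The proof shows $\pd\!_{\Lambda}N=n+1=\dell\Lambda^{\op}=\Findim\Lambda$. Since $\Lambda/p_n\Lambda\cong N\oplus P$ for some projective $P$, the projective dimension is unchanged by passing to $N$, so $\pd\!_{\Lambda}(\Lambda/p_n\Lambda)=\pd\!_{\Lambda}N=\Findim\Lambda$. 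Taking $p=p_n$ then gives a module of the stated form $\Lambda/p\Lambda$ whose projective dimension realizes the (little and big) finitistic dimension.

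The remaining points are bookkeeping rather than substance: one records that $N$ and $\Lambda/p_n\Lambda$ differ only by a projective summand, and one disposes of the low-dimensional cases $\dell\Lambda^{\op}\in\{0,1\}$ separately — but these are exactly the cases already handled in the proof of Theorem~\ref{thm:right serial} and in the subsequent remark, where the same reversal of a sequence of right completions again exhibits a module $\Lambda/p\Lambda$ (with $p$ an arrow, or a trivial path when $\Findim\Lambda=0$) of the correct projective dimension. I do not anticipate any genuine obstacle here, since the entire content of the corollary has effectively been extracted while proving the theorem; the only thing worth stating carefully is the innocuous summand bookkeeping.
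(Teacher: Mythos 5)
Your proposal follows exactly the paper's (implicit) route: the corollary is presented there as an immediate by-product of Theorem \ref{thm:right serial}, and your extraction of the witness $\Lambda/p_n\Lambda$ from \eqref{eq:right serial right completion reversed}, together with the observation that $\Lambda/p_n\Lambda$ and its non-projective summand $N$ have the same projective dimension, is precisely the intended argument; the main case ($\dell\Lambda^{\op}\geq 2$) is therefore fine.

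The one inaccuracy is your dispatch of the low cases. The remark after Theorem \ref{thm:right serial} treats $\dell\Lambda^{\op}=2$, not $1$, and the proof of the theorem handles $\dell\Lambda^{\op}\in\{0,1\}$ only for the chain of equalities (via ``$\Findim\Lambda=0$ iff $\dell\Lambda^{\op}=0$''), without exhibiting any module of the form $\Lambda/p\Lambda$. So when $\Findim\Lambda=1$ you still owe an argument that some nontrivial path $p$ has $p\Lambda$ projective (then $\pd_{\Lambda}(\Lambda/p\Lambda)=1$); this is not a citation to the remark, though it is easy: if no nontrivial path ideal were projective, then by Lemma \ref{lem:syzygy of principal ideals} (and the fact that each vertex of $Q$ has at most one outgoing arrow) every nontrivial $p\Lambda$ would have a nonprojective path ideal as its syzygy, hence infinite projective dimension; a module $M$ with $\pd M=1$ has $\Omega M$ a nonzero projective submodule of $\rad P_0$, and since the indecomposable projectives of a right serial algebra are uniserial, such an embedding forces left multiplication by some nontrivial path to be injective, i.e.\ some $p\Lambda$ to be projective --- contradicting $\findim\Lambda=1$. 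With that small repair (and $p$ a trivial path when $\Findim\Lambda=0$), your proof is complete and coincides with the paper's.
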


The theorem also recovers the result that $\dell\Lambda=\Findim\Lambda^{\op}=\findim\Lambda^{\op}$ and the finitistic dimension is left-right symmetric for Nakayama algebras $\Lambda$.

\begin{corollary}
If $\Lambda$ is a Nakayama algebra, then
\[
\ddell\Lambda=\ddell\Lambda^{\op}=\dell\Lambda=\dell\Lambda^{\op}=\findim\Lambda=\findim\Lambda^{\op}=\Findim\Lambda=\Findim\Lambda^{\op}.
\]
\end{corollary}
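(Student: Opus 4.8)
The plan is to obtain the corollary as an immediate consequence of Theorem \ref{thm:right serial}, applied twice. First I would record the two structural observations that make this work: a Nakayama algebra is by definition both left serial and right serial, and the class of Nakayama algebras is closed under passing to the opposite algebra, since $\Lambda \mapsto \Lambda^{\op}$ merely interchanges the left-serial and right-serial conditions. In particular both $\Lambda$ and $\Lambda^{\op}$ are right serial path algebras, so Theorem \ref{thm:right serial} applies to each of them.

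Applying Theorem \ref{thm:right serial} to $\Lambda$ yields $\findim\Lambda = \Findim\Lambda = \dell\Lambda^{\op} = \ddell\Lambda^{\op} < \infty$, and applying it to $\Lambda^{\op}$ (using $(\Lambda^{\op})^{\op}=\Lambda$) yields $\findim\Lambda^{\op} = \Findim\Lambda^{\op} = \dell\Lambda = \ddell\Lambda < \infty$. Hence all eight quantities in the statement are finite and already organized into these two four-term groups; the only remaining task is to splice the two groups together, i.e.\ to prove one extra equality, for instance $\findim\Lambda = \findim\Lambda^{\op}$ (equivalently $\dell\Lambda = \dell\Lambda^{\op}$).

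This splicing is the one step that needs an argument, and it amounts to the left--right symmetry of the finitistic dimension of a Nakayama algebra, which is already known \cite{ringel2021finitistic, sen2021delooping}; invoking it finishes the proof. Alternatively, and more in the spirit of this paper, one can read it off the reversal technique used in Proposition \ref{prop:tree dell findim} and Theorem \ref{thm:right serial}: the quiver of a connected Nakayama algebra is either a linear quiver $1\to 2\to\cdots\to m$ or an oriented cycle, with no tree part, so every vertex has both indegree and outdegree at most one; consequently reversal gives a genuine bijection between maximal sequences of right completions in $Q$ and in $Q^{\op}$, and the construction of a module $\Lambda/p\Lambda$ realizing $\pd = \dell\Lambda^{\op}$ in the proof of Theorem \ref{thm:right serial} becomes symmetric under $\Lambda\leftrightarrow\Lambda^{\op}$. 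I expect the only slightly delicate point, under this second approach, to be the bookkeeping that reversing a \emph{maximal} sequence of right completions in $Q^{\op}$ again produces a \emph{maximal} sequence in $Q$ in the cyclic case; everything else is immediate from Theorem \ref{thm:right serial}.
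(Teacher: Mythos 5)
Your proposal is correct and follows essentially the same route as the paper: apply Theorem \ref{thm:right serial} to both $\Lambda$ and $\Lambda^{\op}$ (both are right serial), then splice the two four-term chains by establishing $\dell\Lambda=\dell\Lambda^{\op}$ via the reversal construction rather than by citation. The ``delicate point'' you flag is handled in the paper exactly as your second approach anticipates: reversing the maximal sequence realizing $\dell\Lambda^{\op}=n+1$, using the projectivity of $x_1y_0\Lambda$ established in the proof of Theorem \ref{thm:right serial}, and factoring $p_n=y_nx_n$ to exhibit a simple $\Lambda$-module of finite projective dimension $n+1$, which gives $\dell\Lambda\geq\dell\Lambda^{\op}$ and hence equality by symmetry.
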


\begin{proof}
Since $\Lambda$ is right serial, we have $\findim\Lambda = \Findim\Lambda = \dell\Lambda^{\op} = \ddell\Lambda^{\op}$. Suppose the sequences that achieves the delooping level and maximum projective dimension are $\widetilde{y_0}\widetilde{x_1}\widetilde{y_1}\cdots\widetilde{x_{n-1}}\widetilde{y_{n-1}}\widetilde{p_n}$ in $Q^{\op}$ and $p_ny_{n-1}x_{n-1}y_{n-2}\cdots y_1x_1y_0$ in $Q$. The latter sequence shows $\pd\!_{\Lambda} (\Lambda/p_n\Lambda) = n+1$ and $x_1y_0\Lambda$ is projective. Factoring the path $p_n$ into $y_nx_n$ such that $y_n$ is an arrow, we rewrite the sequence of right completions in $Q$ as
\begin{equation}
\label{eq:Nakayama}
y_nx_ny_{n-1}x_{n-1}y_{n-2}\cdots y_1x_1y_0.
\end{equation}

Then the simple module $S=S_{s(y_0)}$ has finite projective dimension $n+1$ since
\begin{itemize}
\item $\Omega_{\Lambda} S = y_n\Lambda$
\item $\Omega_{\Lambda}^2 S = x_ny_{n-1}\Lambda$
\item $\Omega_{\Lambda}^n S = x_2y_1\Lambda$, which is not projective
\item $\Omega_{\Lambda}^{n+1} S = x_1y_0\Lambda$, which is projective
\end{itemize}

This implies $\findim\Lambda\geq n+1=\dell\Lambda^{\op}\geq\dell\Lambda$, but since the argument is symmetric, we also have $\dell\Lambda\geq \dell\Lambda^{\op}$, completing the proof.
\end{proof}

For left serial path algebras $\Lambda$, $\dell\Lambda=\Findim\Lambda^{\op}$ is not necessarily true, even for representation-finite algebras. We provide a sufficient condition for when the equality holds and an example (Example \ref{ex:right serial}) for when the equality fails if the sufficient condition is not satisfied.

\begin{theorem}
\label{thm:left serial}
Let $\Lambda=\k Q/I$ be a left serial path algebra. If every simple $\Lambda^{\op}$-module $S$ with $\dell\!_{\Lambda^{\op}} S = \dell\Lambda^{\op}$ has its corresponding sequence of right completions entirely supported on the cyclic part or the tree part of the quiver, then the right little and big finitistic dimensions of $\Lambda$ are equal to the left delooping level and derived delooping level of $\Lambda$, \textit{i.e.},
\[
\findim\Lambda = \Findim\Lambda = \dell\Lambda^{\op} = \ddell\Lambda^{\op} < \infty.
\]
\end{theorem}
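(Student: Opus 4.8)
The plan is to push the method of Proposition~\ref{prop:tree dell findim} and Theorem~\ref{thm:right serial} through the extra hypothesis: reverse a sequence of right completions in $Q^{\op}$ that realizes the delooping level into a sequence in $Q$ exhibiting a $\Lambda$-module of large finite projective dimension. All four quantities are finite because left serial path algebras are monomial, and since
\[
\findim\Lambda \le \Findim\Lambda \le \ddell\Lambda^{\op} \le \dell\Lambda^{\op}
\]
always holds, it suffices to prove $\dell\Lambda^{\op}\le\findim\Lambda$. Note also that $\Lambda^{\op}$ is right serial, so every vertex of $Q^{\op}$ has at most one outgoing arrow, by the observation in the proof of Theorem~\ref{thm:right serial} applied to $\Lambda^{\op}$. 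The cases $\dell\Lambda^{\op}\in\{0,1\}$ are dealt with as in Theorem~\ref{thm:right serial}, so fix a simple $\Lambda^{\op}$-module $S=S_{\tilde{v}}$ with $\dell\!_{\Lambda^{\op}}S=\dell\Lambda^{\op}=n+1\ge 2$, together with its associated sequence of right completions $\widetilde{y_0}\widetilde{x_1}\widetilde{y_1}\cdots\widetilde{x_{n-1}}\widetilde{y_{n-1}}\widetilde{p_n}$ in $Q^{\op}$, factored as in \eqref{eq:right serial right completion factored}, which records that $\Omega^i_{\Lambda^{\op}}S$ is not $(i+1)$-deloopable for $i\le n$. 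By hypothesis this sequence may be taken to be supported entirely on the cyclic part of $Q^{\op}$ or entirely on its tree part. Reversing it to $Q$ yields $p_n y_{n-1}x_{n-1}\cdots y_1 x_1 y_0$ as in \eqref{eq:right serial right completion reversed}, and we let $N$ be the non-projective summand of $\Lambda/p_n\Lambda$.

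In both cases Lemma~\ref{lem:existence of module with pd equal dell} already gives $\pd\!_{\Lambda}N \ge n+1$, so the only remaining task is to show $\pd\!_{\Lambda}N < \infty$; once that is in hand,
\[
\findim\Lambda \ge \pd\!_{\Lambda}N \ge n+1 = \dell\Lambda^{\op} \ge \ddell\Lambda^{\op} \ge \Findim\Lambda \ge \findim\Lambda
\]
closes the chain and all four invariants coincide. When the chosen sequence lies on the tree part of $Q^{\op}$, that part is acyclic and the syzygies of $N$ over $\Lambda$ remain in the corresponding finite acyclic region of $Q$, so they eventually vanish and $\pd\!_{\Lambda}N$ is finite, exactly as in Proposition~\ref{prop:tree dell findim}. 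The substantive case is that the sequence lies on the cyclic part, and here I would follow Theorem~\ref{thm:right serial}: show that $\Omega^{n}_{\Lambda}N = x_2y_1\Lambda$ is non-projective while $\Omega^{n+1}_{\Lambda}N = x_1y_0\Lambda$ is projective, so that $\pd\!_{\Lambda}N = n+1$.

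Projectivity of $x_1y_0\Lambda$ is obtained by the contradiction argument of Theorem~\ref{thm:right serial}, and here it runs cleanly: a hypothetical right completion of $x_1y_0$ in $Q$ lengthens the reversed sequence, and reversing the longer sequence back into $Q^{\op}$ — where each outgoing arrow is unique, so the continuation is forced — displays a summand of $\Omega^3_{\Lambda^{\op}}S$ as a summand of a fourth syzygy, contradicting that $\Omega^3_{\Lambda^{\op}}S$ is not $4$-deloopable (the small cases $\dell\Lambda^{\op}\le 3$ being handled as in Theorem~\ref{thm:right serial}). The delicate point is identifying the intermediate syzygies $\Omega^j_{\Lambda}N$: in the right serial case $Q$ itself has out-degree at most one, so the reversed sequence is the only sequence of right completions through those arrows and each $\Omega^j_{\Lambda}N$ is the stated principal ideal up to projectives by Lemmas~\ref{lem:syzygy of principal ideals} and \ref{lem:existence of module with pd equal dell}; for a left serial algebra $Q$ may branch forward, so $\Omega^2_{\Lambda}N = \Omega_{\Lambda}(p_n\Lambda)$ might a priori pick up extra summands from further right completions of $p_n$, and likewise at later stages, potentially destroying the computation of $\pd\!_{\Lambda}N$. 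This is where the cyclic hypothesis enters: the reverse of the sequence lies on the straight oriented cycle of $Q$, the minimal relations involved are supported on that cycle, so the right completions building the resolution of $N$ run along the cycle and are forced there, and a right completion peeling off the cycle into the tree part would, after reversal back into $Q^{\op}$, produce a sequence of right completions from some simple module of length exceeding $n$ — contradicting the maximality of $\dell\!_{\Lambda^{\op}}S$ — or a larger delooping witness. With the extraneous summands ruled out, $\Omega^{n}_{\Lambda}N$ and $\Omega^{n+1}_{\Lambda}N$ are as claimed.

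I expect the main obstacle to be exactly this last point: making rigorous that every right completion leaving the cycle in the reversed sequence traces back, via reversal into $Q^{\op}$, either to a contradiction with the maximality of $\dell\!_{\Lambda^{\op}}S$ or to a strictly longer right completion sequence, so that the cyclic-versus-tree dichotomy of the hypothesis is precisely what is needed to pin down $\Omega^j_{\Lambda}N$. Example~\ref{ex:right serial} shows that the equality genuinely fails without such a hypothesis, so the extra condition cannot be removed outright.
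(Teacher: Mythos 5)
Your overall strategy is the paper's: reduce to $\dell\Lambda^{\op}\le\findim\Lambda$, reverse the extremal sequence of right completions, get the lower bound $\pd\!_{\Lambda}N\ge n+1$ from Lemma \ref{lem:existence of module with pd equal dell}, obtain projectivity of $x_1y_0\Lambda$ as in Theorem \ref{thm:right serial}, and use the hypothesis only to control finiteness; your tree-part case is fine. The gap is in the cyclic case. You insist on computing $\pd\!_{\Lambda}N=n+1$ exactly, and to do so you claim that any right completion peeling off the cycle into the tree part can be excluded because, after reversal, it would produce a sequence of right completions from some simple module of length exceeding $n$, ``contradicting the maximality of $\dell\!_{\Lambda^{\op}}S$.'' This step is not justified and is false in general: the hypothesis only constrains the sequences attached to simples attaining $\dell\Lambda^{\op}$, and nothing forbids minimal relations of $Q$ that start on the cycle and end in a tree, so the syzygies $\Omega^j_{\Lambda}N$ can genuinely pick up extra summands generated by paths leaving the cycle. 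Moreover, even if reversal did yield a longer sequence of right completions attached to some simple, that would not by itself contradict maximality: as the paper points out in the proof of Proposition \ref{prop:tree dell findim}, a sequence of length $n$ only gives the bound $\dell\le n+1$, not equality, so ``longer sequence'' does not translate into ``larger delooping level'' without further argument. You flagged exactly this point as the main obstacle, and it is where your proof is incomplete.

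The repair --- and the paper's actual argument --- is that these extra summands need not be excluded at all, because the theorem only requires $n+1\le\pd\!_{\Lambda}N<\infty$, not equality. Since $\Lambda$ is left serial, every vertex of $Q$ has in-degree at most one, so once a path (hence a sequence of right completions) leaves the oriented cycle it can never return and must remain in the finite acyclic tree part; thus every branch that deviates from the cycle terminates after boundedly many steps, while the unique branch staying on the cycle is the reversed sequence \eqref{eq:left serial right completion reversed}, which terminates because $x_1y_0\Lambda$ is projective. Hence all sequences of right completions starting with $p_n$ have finite length, so $\pd\!_{\Lambda}N<\infty$, and the chain
\[
\findim\Lambda\;\ge\;\pd\!_{\Lambda}N\;\ge\;n+1\;=\;\dell\Lambda^{\op}\;\ge\;\ddell\Lambda^{\op}\;\ge\;\Findim\Lambda\;\ge\;\findim\Lambda
\]
closes; only a posteriori does this force $\pd\!_{\Lambda}N=n+1$. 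With your cyclic-case argument replaced by this observation, the rest of your proposal goes through.
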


\begin{proof}
The proof is similar to that of Theorem \ref{thm:right serial}. It suffices to prove $\dell\Lambda^{\op}\leq\findim\Lambda$. Let $S=S_{\tilde{v}}$ be a simple module in $\mod\Lambda^{\op}$ such that $\dell\!_{\Lambda^{\op}} S = \dell\Lambda^{\op} = n+1$, which corresponds to the sequence of right completions
\begin{equation}
\label{eq:left serial right completion factored}
\widetilde{y_0}\widetilde{x_1}\widetilde{y_1}\widetilde{x_2}\widetilde{y_2} \cdots \widetilde{x_{n-1}}\widetilde{y_{n-1}}\widetilde{p_n},
\end{equation}
where this sequence is unique since each vertex in $Q^{\op}$ has at most one outgoing arrow. The cases when $\dell\Lambda^{\op}$ is zero, one, or two are treated the same way as in the proof of Theorem \ref{thm:right serial}.

For $n\geq 2$ and $\dell\Lambda^{\op}\geq 3$, the same argument shows $x_1y_0\Lambda$ is a projective $\Lambda$-module. We reverse \eqref{eq:left serial right completion factored} to get a sequence of right completions in $Q$
\begin{equation}
\label{eq:left serial right completion reversed}
p_n y_{n-1}x_{n-1}\cdots y_1x_1y_0.
\end{equation}
Let $M=\Lambda/p_n\Lambda\in\mod\Lambda$. We know $\pd\!_{\Lambda} M\geq n+1$. Since there is no restriction on the number of arrows going out of each vertex in $Q$, $\pd\!_{\Lambda} M$ could be infinite. However, we assumed that all vertices in \eqref{eq:left serial right completion reversed} are all supported either in the cyclic part or in the tree part. In both cases, other sequences of right completions starting with $p_n$ have finite length as they can only stay in the tree part. Therefore, $n+1\leq\pd\!_{\Lambda} M<\infty$.

\end{proof}

The sufficient condition that is in the statement of Theorem \ref{thm:left serial} is clearly not a necessary condition, since the algebra can have finite global dimension without satisfying the condition. If the condition is not satisfied, $\dell\Lambda^{\op}=\Findim\Lambda$ does not hold even for representation-finite algebras. We demonstrate it in the next example and show that $\ddell\Lambda=\Findim\Lambda^{\op}$ in that example.

\begin{example}
\label{ex:right serial}
\begingroup
\setlength\arraycolsep{1pt}
Let $Q=
\begin{tikzcd}[ampersand replacement=\&,column sep=small]
{} \& {} \& 6 \arrow[dl, "\beta_1",swap] \& 7 \arrow[l, "\beta_2"] \\
{} \& 1 \arrow[dl,leftarrow,"\alpha_5",swap] \arrow[rr,"\alpha_1"] \& {} \& 2 \arrow[dr,"\alpha_2"] \& {} \\
5 \arrow[rr,leftarrow,"\alpha_4"] \& {} \& 4 \& {} \& 3 \arrow[ll,"\alpha_3",swap] \\
\end{tikzcd}$
and $\Lambda=\mathbb{K}Q/I$ a monomial algebra such that the indecomposable projective modules of $\Lambda$ and $\Lambda^{\op}$ are
\[
\text{$\Lambda$-modules: } \begin{matrix} 1 \\ 2 \\ 3 \end{matrix} \quad \begin{matrix} 2 \\ 3 \\ 4 \end{matrix} \quad \begin{matrix} 3 \\ 4 \\ 5 \\ 1 \end{matrix} \quad \begin{matrix} 4 \\ 5 \\ 1 \\ 2 \end{matrix} \quad \begin{matrix} 5 \\ 1 \\ 2 \\ 3 \end{matrix} \quad \begin{matrix} 6 \\ 1 \end{matrix} \quad \begin{matrix} 7 \\ 6 \end{matrix}; \qquad
\text{$\Lambda^{\op}$-modules: } \begin{matrix} {} & 1 & {} \\ 5 & {} & 6 \\ 4 & {} & {} \\ 3 & {} & {} \end{matrix} \quad \begin{matrix} 2 \\ 1 \\ 5 \\ 4 \end{matrix} \quad \begin{matrix} 3 \\ 2 \\ 1 \\ 5 \end{matrix} \quad \begin{matrix} 4 \\ 3 \\ 2 \end{matrix} \quad \begin{matrix} 5 \\ 4 \\ 3 \end{matrix} \quad \begin{matrix} 6 \\ 7 \end{matrix} \quad 7.
\]

We can show that $\Lambda$ is representation-finite, so it is straightforward to find $\Findim\Lambda=1$ and $\Findim\Lambda^{\op}=2$. Each vertex in $Q$ has at most one outgoing arrow, so $\Lambda$ is right serial and $\Lambda^{\op}$ is left serial. 

\underline{$\Lambda$ right serial:} Starting with $\dell\Lambda^{\op}$, we find that 
the simple module supported on vertex 1 in $\mod\Lambda^{\op}$ is the only simple with nonzero delooping level. In the truncated projective resolution
\[
\text{in } \mod\Lambda^{\op}: 0 \to \begin{matrix} 5 \\ 4 \\ 3 \end{matrix} \oplus \widetilde{S_6} \to \begin{matrix} {} & 1 & {} \\ 5 & {} & 6 \\ 4 & {} & {} \\ 3 & {} & {} \end{matrix} \to \widetilde{S_1} \to 0,
\]
the summand $\widetilde{S_6}$ of $\Omega_{\Lambda^{\op}} \widetilde{S_1}$ is a second syzygy of $\begin{matrix} 3 \\ 2 \end{matrix}$, so $\dell\Lambda^{\op}=1=\Findim\Lambda$. This corresponds to a trivial case in Theorem \ref{thm:right serial}.

\underline{$\Lambda^{\op}$ left serial:} Now we show $3=\dell\Lambda>\Findim\Lambda^{\op}=\ddell\Lambda=2$. For $\dell\Lambda$, $S_5$ and $S_7$ are the two simple modules with nonzero delooping level. It is clear that $\dell\!_{\Lambda} S_5 = 1$. The truncated projective resolution of $S_7$ below shows $\dell\!_{\Lambda} S_7=3$
\begin{equation}
\label{eq:left serial dell neq Findim}
\begin{tikzcd}[ampersand replacement=\&, row sep=tiny, column sep=small]
0 \ar[r] \& \Omega_{\Lambda}^3 S_7 = \begin{matrix} 2 \\ 3 \end{matrix} \ar[r] \& \begin{matrix} 1 \\ 2 \\ 3 \end{matrix} \ar[rr] \ar[rd] \& {} \& \begin{matrix} 6 \\ 1 \end{matrix} \ar[rr] \ar[rd] \& {} \& \begin{matrix} 7 \\ 6 \end{matrix} \ar[r] \& S_7 \ar[r] \& 0, \\
{} \& {} \& {} \& \Omega_{\Lambda}^2 S_7 = S_1 \ar[ur,hookrightarrow] \& {} \& \Omega_{\Lambda} S_7 = S_6 \ar[ur,hookrightarrow] \& {} \& {} \& {}
\end{tikzcd}
\end{equation}
since $\Omega_{\Lambda}^3 S_7$ is infinitely deloopable
\begin{itemize}
\item $\Omega_{\Lambda} S_7 = S_6$ is not a direct summand of $\Omega^2\agemo^2 S_6 = 0$
\item $\Omega_{\Lambda}^2 S_7 = S_1$ is not a direct summand of $\Omega^3\agemo^3 S_1 = \begin{matrix} 5 \\ 1 \end{matrix}$
\item $\Omega_{\Lambda}^3 S_7 = \begin{matrix} 2 \\ 3\end{matrix} = \Omega_{\Lambda}^4 \left( \begin{matrix} 4 \\ 5 \\ 1 \end{matrix} \right)$.
\end{itemize}

The sequence of right completions corresponding to \eqref{eq:left serial dell neq Findim} is $\beta_2\beta_1\alpha_1$ with $\beta_2\beta_1$ and $\beta_1\alpha_1$ being the minimal relations. Note that the vertices in the sequence are in both the cyclic and tree part of the quiver, so the condition in Theorem \ref{thm:left serial} is not satisfied. The candidate $\Lambda^{\op}$-module considered in Theorem \ref{thm:left serial} is $\Lambda^{\op}/\widetilde{\alpha_1}\Lambda^{\op}=\widetilde{S_2}$. However, since the reversed sequence of right completions $\widetilde{\alpha_1}\widetilde{\beta_1}\widetilde{\beta_2}$ has into the cyclic part another branch $\widetilde{\alpha_1}\widetilde{\alpha_5}\widetilde{\alpha_4}\widetilde{\alpha_3}\widetilde{\alpha_2}\widetilde{\alpha_1}\widetilde{\alpha_5}\widetilde{\alpha_4}\widetilde{\alpha_3}\cdots$ that goes on infinitely, $\pd\!_{\Lambda^{\op}} \widetilde{S_2} = \infty$. The reverse of the subsequence $\beta_2\beta_1$ which is completely in the tree part leads to the module $\Lambda^{\op}/\widetilde{\beta_1}\Lambda^{\op}$, which has projective dimension 2 and the projective resolution
\[
0 \to S_7 \to \begin{matrix} 6 \\ 7 \end{matrix} \to \begin{matrix} {} & 1 & {} \\ 5 & {} & 6 \\ 4 & {} & {} \\ 3 & {} & {} \end{matrix} \to \begin{matrix} 1 \\ 5 \\ 4 \\ 3 \end{matrix} \to 0.
\]

However, $\ddell\Lambda=\Findim\Lambda^{\op}=2$ using the following exact sequence
\[
\begingroup
\setlength\arraycolsep{0pt}
0 \to \begin{matrix} 5 \\ 1 \end{matrix} \to \begin{matrix} 5 & & 6 \\ & 1 & \end{matrix} \to \begin{matrix} 7 \\ 6 \end{matrix} \to S_7 \to 0,
\endgroup
\]
since
\[
\begin{matrix} 5 \\ 1 \end{matrix} = \Omega^3 \left( \begin{matrix} 4 \\ 5 \\ 1 \end{matrix}\right) \Rightarrow 3\text{-}\dell\! \left(\begin{matrix} 5 \\ 1 \end{matrix}\right) \leq 0, \quad \Omega \left(\begingroup
\setlength\arraycolsep{0pt} \begin{matrix} 5 & & 6 \\ & 1 & \end{matrix} \endgroup \right) = \begin{matrix} 1 \\ 2 \\ 3 \end{matrix} \Rightarrow 2\text{-}\dell\! \left(\begingroup
\setlength\arraycolsep{0pt} \begin{matrix} 5 & & 6 \\ & 1 & \end{matrix} \endgroup \right) \leq 1, \quad 1\text{-}\dell\!\left(\begin{matrix} 7 \\ 6 \end{matrix}\right)\leq 2.
\]
\endgroup
\end{example}

We end the paper with some open questions. In the monomial example where $\dell\Lambda^{\op}-\Findim\Lambda$ can be made arbitrarily large \cite{barrios2024delooping}, the corresponding quiver has five arrows going out of the ``source'' vertex. In our example where their difference is 1, each vertex has at most two outgoing arrows. We ask whether the relationship among $\dell\Lambda^{\op}$, $\ddell\Lambda^{\op}$, and $\Findim\Lambda$ can be quantified by the maximum incoming/outgoing arrows out of each vertex.

\begin{question}
Given a quiver, there are many ways to manipulate the relations to create monomial algebras with desired finitistic dimensions \cite{happel2013algebras}. Similarly, if $\Lambda$ is a monomial algebra, can we quantify the differences $\dell\Lambda^{\op}-\Findim\Lambda$ and $\ddell\Lambda^{\op}-\Findim\Lambda$ in terms of the underlying quiver of $\Lambda$ and its relations?
\end{question}

\begin{question}
For left serial algebras $\Lambda$, is there a class of examples where $\dell\Lambda^{\op}-\Findim\Lambda$ and/or $\ddell\Lambda^{\op}-\Findim\Lambda$ becomes arbitrarily large? This would simplify the example in \cite{barrios2024delooping}.
\end{question}

\bibliographystyle{plain}
\bibliography{refs}

\end{document}